\def\Id{{\rm Id}}
\def\op{{\rm op}}
\def\vec{{\rm vec}}
\def\eps{\varepsilon}
\def\N{\mathbb N}
\def\R{\mathbb R}
\begin{document}
\newtheorem{proposition}{Proposition}[section]
\newtheorem{definition}{Definition}[section]
\newtheorem{theorem}{Theorem}[section]
\newtheorem{lemma}{Lemma}[section]
\newtheorem{corollary}{Corollary}[section]
\newtheorem{remark}{Remark}[section]

\title[Corrections to Wigner type phase space methods]{Corrections to Wigner type phase space methods}

\author{Wolfgang Gaim$^1$ and Caroline Lasser$^2$}
\address{$^1$ Fachbereich Mathematik, Eberhard Karls Universit\"at T\"ubingen, 72076 T\"ubingen, Germany}
\address{$^2$ Zentrum Mathematik, Technische Universit\"at M\"unchen, 80290 M\"unchen, Germany}
\eads{\mailto{wolfgang.gaim@uni-tuebingen.de}, \mailto{classer@ma.tum.de}}

\begin{abstract}
Over decades, the time evolution of Wigner functions along classical Hamiltonian flows has been used for 
approximating key signatures of molecular quantum systems. Such approximations are for example the Wigner phase space method, the linearized semiclassical initial value representation, or the statistical quasiclassical method. 
The mathematical backbone of these approximations is Egorov's theorem. In this paper, we reformulate 
the well-known second order correction to Egorov's theorem as a system of ordinary differential equations 
and derive an algorithm with improved asymptotic accuracy for the computation of expectation values. For models with easily evaluated higher order derivatives of the classical Hamiltonian, the new algorithm's corrections are computationally less expensive than the leading order Wigner method. Numerical test calculations for a two-dimensional torsional system confirm the theoretical accuracy and efficiency of the new method.
\end{abstract}

\ams{81S30, 81Q20, 81-08, 65D30, 65Z05}
\pacs{82.20.Ln, 03.65.Sq, 34.10.+x}

\submitto{\NL}

\maketitle

\section{Introduction}
Molecular quantum systems are described by an unbounded self-adjoint operator, the Schr\"odinger operator 
\begin{equation*}
H = -\case{\eps^2}{2}\Delta + V,
\end{equation*}
acting on the Hilbert space of complex-valued square-integrable functions $L^2(\R^d)$. 
Here, $\eps>0$ is a small positive parameter related to the inverse square root of the average nuclear mass, and $V:\R^d\to\R$ is the nuclear potential function resulting from the Born--Oppenheimer approximation, see \cite{ST01}. 
The relevant time scales of nuclear quantum motion are of the order~$1/\eps$. The time evolution of an 
initial wave function $\psi_0\in L^2(\R^d)$ is governed by the time-dependent linear Schr\"odinger equation 
\begin{equation}\label{schroedinger}
\rmi \eps\partial_t \psi_t = H\psi_t
\end{equation}
with the appropriate $\eps$-scaling of the time-derivative or -- equivalently -- by the action of the unitary evolution operator $\e^{-iHt/\eps}$, since 
\begin{equation*}
\psi_t=\rme^{-\rmi Ht/\eps}\psi_0\qquad (t\in\R).
\end{equation*} 

Even though the Schr\"odinger equation \eref{schroedinger} is a {\em linear} partial differential equation, the numerical simulation of physical quantities derived from the wave function $\psi_t$ is notoriously difficult for two reasons: 
The dimension $d\gg1$ of the nuclear configuration space is large. If a molecule consists of $n$ nuclei, then $d=3n$. Just for a single water molecule, for example, we have $d = 3\cdot 3 = 9$. 
Moreover, nuclear quantum motion is highly oscillatory. Solutions typically oscillate with frequencies of the order $1/\eps$ in time and space, while $\eps$ ranges between $0.001$ and $0.1$, depending on the molecular system under consideration. For the hydrogen molecule H$_2$, for example, one has $\eps\approx0.0233$, while $\eps\approx0.0035$ for iodine monobromide IBr.

As an answer to these challenges, chemical physicists have developed approximations involving the {\em nonlinear} time evolution of classical mechanics. One uses the Hamiltonian function 
\begin{equation}\label{hamiltonian}
h:\R^d\times\R^d\to\R,\quad h(q,p) = \case{1}{2}|p|^2 + V(q),
\end{equation}
the associated Hamiltonian system 
\begin{equation}\label{hamilton}
\dot q_t = \partial_p h(q_t,p_t),\qquad \dot p_t = -\partial_q h(q_t,p_t),
\end{equation}
and the corresponding Hamiltonian flow 
\begin{equation*}
\Phi^t:\R^{2d}\to\R^{2d}
\end{equation*} 
as the classical counterparts to the Schr\"odinger operator $H$, the time-dependent Schr\"odinger equation~\eref{schroedinger}, and the unitary evolution operator 
$\rme^{-\rmi H t/\eps}$. 
This quantum-classical correspondence is most elegantly elaborated by using Wigner functions and Weyl quantization. 

The Wigner function $W_{\psi}: \R^{2d}\to\R$ of a square integrable function $\psi\in L^2(\R^d)$ is a real-valued, square integrable, continuous function on phase space $\R^{2d}$ obtained by an inverse Fourier transform of the autocorrelation function of $\psi$, see the short summary in~\ref{app:wigner}.  The Wigner function can be thought of as a probability density on phase space $\R^{2d}$, though in general it might attain negative values. It has been introduced by E.~Wigner in \cite{Wi32} when developing the thermodynamics of quantum mechanical systems. Crucial properties of the Wigner function are the orthogonality relation
\begin{equation*}
|\langle\phi,\psi\rangle|^2 = (2\pi\eps)^d \int_{\R^{2d}} W_{\phi}(z) W_{\psi}(z) \,\rmd z\qquad \left(\phi,\psi\in L^2(\R^d)\right)
\end{equation*}
and the asymptotically classical time evolution 
\begin{equation}\label{wigner}
W_{\psi_t} = W_{\psi_0}\circ\Phi^{-t} + O(\eps^2),\qquad \eps\to0,
\end{equation}
for the solution $\psi_t$ of the Schr\"odinger equation \eref{schroedinger}. The Wigner phase space method of E.~Heller \cite{He76,BH81} and the statistical quasiclassical method of H.~Lee and M.~Scully \cite{LS80}, for example, use these properties for computing the transition probabilities from a given state $\phi$ to $\psi_t$ according to
\begin{equation*}
|\langle\phi,\psi_t\rangle|^2  
\approx  (2\pi\eps)^d \int_{\R^{2d}} W_{\phi}(\Phi^t(z)) W_{\psi_0}(z) \,\rmd z.
\end{equation*}

Viewing the Wigner function $W_\psi$ of a square integrable function $\psi$ as a tempered distribution, that is, as a continuous linear mapping from the Schwartz functions $a:\R^{2d}\to\R$ to the real numbers, one arrives at the identity
\begin{equation}\label{eq:exp}
\int_{\R^{2d}} a(z) W_\psi(z) \,\rmd z = \langle \psi,\op(a)\psi\rangle,
\end{equation}
where $\op(a)$ denotes the bounded linear operator on $L^2(\R^d)$ obtained by the Weyl quantization of the function $a$. The Weyl operator $\op(a)$ is a pseudo-differential operator, a generalized partial differential operator, which treats differentiation and multiplication by functions on the same footing, see the appendix. With an appropriate handling of operator domains, Weyl quantization also applies for unbounded linear operators, and the Schr\"odinger operator $H=\op(h)$, for example, is the Weyl quantized classical Hamiltonian function $h$ defined in \eref{hamiltonian}. Often, the Weyl operator $\op(a)$ is also called the quantum observable assocaited with the classical observable $a$. Besides the convenient relation \eref{eq:exp}, Weyl quantization enjoys the beautiful property that the trace of the product of Weyl operators can be expressed as the integral
\begin{equation*}
\tr\!\left(\op(a)\op(b)\right) = (2\pi\eps)^{-d} \int_{\R^{2d}} a(z) b(z) \,\rmd z,
\end{equation*}
provided that the classical observables $a$ and $b$ satisfy appropriate regularity and growth conditions, see e.g. \cite[Proposition~9.2]{DS99} or \cite[Proposition~284]{Go11}.

From the Weyl point of view, the asymptotic time evolution of the Wigner function~\eref{wigner} can equivalently be formulated as 
\begin{equation}\label{egorov}
\rme^{\rmi Ht/\eps} \op(a) \rme^{-\rmi Ht/\eps} = \op(a\circ\Phi^t) + O(\eps^2),\qquad \eps\to0.
\end{equation}
This quantum-classical approximation of time evolved quantum observables is known as Egorov's theorem and has first been formulated in \cite{Eg69} within the H\"ormander theory of pseudo-differential operators, see also \cite{Ro87,BR02,Zw12} for refined error estimates in the context of semiclassical microlocal analysis. The computational power of classically propagating quantum observables has been recognized by W.~Miller and H.~Wang \cite{Mi74,WSM98}, who approximate the time-dependent correlation function of $\op(a)$ and $\op(b)$ according to
\begin{equation*}
\tr\!\left(\rme^{\rmi Ht/\eps}\op(a)\rme^{-\rmi Ht/\eps}\op(b)\right) \approx 
(2\pi\eps)^{-d} \int_{\R^{2d}} a(\Phi^t(z)) b(z)\,\rmd z.
\end{equation*}
In the chemical physics literature this approximation is referred to as the linearized semiclassical initial value representation (LSC-IVR), and it is derived from Fourier integral operator representations of the unitary evolution operator $\rme^{-\rmi Ht/\eps}$, see also \cite{TW04} for a recent review. It seems that the link of LSC-IVR to Egorov's theorem has not been noticed so far. 

In a study of the one-dimensional Morse oscillator, H.~Lee and M.~Scully \cite{LS82} have proposed to improve the second order approximation~\eref{wigner} for the Wigner function of $\psi_t$ by $W_{\psi_t} \approx W$ with
$$
\partial_t W = -p \;\partial_q W  + V_{\rm eff}'\;\partial_p W,
$$ 
where the derivative of the effective potential $V_{\rm eff}$ is given by 
$$
V_{\rm eff}' = V' - \frac{\eps^2}{24} \gamma\, V'''.
$$ 
For the correcting factor  $\gamma=\gamma(q,p,t)$ they mention the three options
$$
\gamma_1 = \partial^3_p(W_{\psi_0}\circ\Phi^{-t})/\partial_p W,\quad
\gamma_2 = \partial^3_p(W_{\psi_0}\circ\Phi^{-t})/\partial_p (W_{\psi_0}\circ\Phi^{-t}),\quad
\gamma_3 = \partial^3_p W/\partial_p W,
$$
and they use the term Wigner trajectories for the solutions of the ordinary differential equation
$\dot q_t = p_t$, $\dot p_t = -V_{\rm eff}'(q_t,p_t,t)$,  
see also \cite{L92, L95}. Later, A.~Donoso and C.~Martens~\cite{DM01} have defined their entangled classical trajectories by a fourth variant $\gamma_4 = \partial_p^2 W/W$. Trajectories generated by $\gamma_3$ and $\gamma_4$ and generalisations thereof have also been incorporated for the numerical simulation of time-dependent correlation functions of one-dimensional systems by J.~Liu and W.~Willer \cite{LM07}.  

Our aim here is also a higher order approximation of the dynamics. However, we will work with the rigorous mathematical framework of Egorov's theorem~\eref{egorov} and construct phase space trajectories without any entanglement for arbitrary dimensions~$d\ge1$.
The key element for proving Egorov's theorem is an asympotic expansion of the commutator 
\begin{equation}\label{commutator}
\frac{\rmi}{\eps}\left[\op(h),\op(a)\right] \;\sim\; \sum_{k\in 2\N} \left(\frac{\eps}{2\rmi}\right)^{k} \op(\{h,a\}_{k+1}).
\end{equation}
where $\{h,a\}_{k+1}$ denotes a generalization of the usual Poisson bracket involving derivatives of the functions $h$ and $a$ up to order $k+1$, see \Sref{corrections}. In the context of response theory, M.~Kryvohuz and J.~Cao \cite{KC05} use this expansion up to the fourth term for a systematic improvement of linear response computations over long times. The commutator expansion \eref{commutator} also reveals, that for Hamiltonians $h$, wich are polynomials of degree less or equal than two, the remainder of Egorov's theorem vanishes.
This exact Egorov result is utilized by H.~Waalkens, R.~Schubert and S.~Wiggins for their quantum normal form algorithm in dynamical transition state theory~\cite{WSW08}. 

For general Hamiltonian functions $h$, the expansion \eref{commutator} implies a higher order version of Egorov's theorem, 
\begin{equation}\label{egorov_expansion}
\rme^{\rmi Ht/\eps} \op(a) \rme^{-\rmi Ht/\eps}\sim\sum_{k\in 2\N}\eps^k\, \op(a_k(t))
\end{equation}
with leading order term $a_0(t) = a\circ\Phi^t$ and corrections 
\begin{equation*}
a_k(t) = \sum_{l\in\{0,2,\ldots,k-2\}} \left(\case{\rmi}{2}\right)^{k-l}\;\int_0^t \{h,a_l(\tau)\}_{k+1-l}\circ\Phi^{t-\tau} \,\rmd\tau.
\end{equation*}
It is remarkable that the corrections are only built from derivatives of the observable~$a$, the Hamiltonian function~$h$ and the flow $\Phi^t$, which has also been emphasized by M.~Pulvirenti in \cite{Pu06}, when deriving higher order estimates for the Wigner function of the wave function $\psi_t$. 
 
Since the higher order Egorov expansion~\eref{egorov_expansion} is built of even powers of the parameter~$\eps$, the first  correction 
\begin{equation}\label{def:a2}
a_2(t) = - \case{1}{4} \int_0^t \{h,a\circ\Phi^\tau\}_3\circ\Phi^{t-\tau} \,\rmd\tau
\end{equation}
provides the fourth order estimate
\begin{equation*}
\rme^{\rmi Ht/\eps} \op(a) \rme^{-\rmi Ht/\eps} = \op(a\circ\Phi^t + \eps^2 a_2(t)) + O(\eps^4).
\end{equation*}  
The aim of this paper is the exemplary analysis of this correction and its discretization for the numerical computation of expectation values. In a first step, we reformulate it as 
\begin{equation*}
a_2(t) = \Lambda_1^t(Da\circ\Phi^t) + \Lambda_2^t(D^2a\circ\Phi^t) + \Lambda_3^t(D^3a\circ\Phi^t),
\end{equation*}
where 
\begin{equation*}
\Lambda_k^t: \R^{2d\times\cdots\times 2d}\to\R \qquad (k=1,2,3),
\end{equation*} 
is an explicitly defined linear mapping from the space of $k$-tensors to the real numbers, which depends on derivatives of the Hamiltonian function $h$ and the flow $\Phi^t$, but not on the observable $a$. In spirit, this formulation of $a_2(t)$ is close to the Wigner trajectories generated by the first correction factor $\gamma_1$. In the next step, we derive a first order ordinary differential system for the components of $\Lambda_1^t$, $\Lambda_2^t$, and $\Lambda_3^t$. The vectorization of this equation results in
\begin{equation}\label{Lambda}
\frac{d}{dt}{\Lambda^t} = N^t \Lambda^t + C^t,
\end{equation}
where the building blocks of the matrix $N^t$ and the vector $C^t$ are components of the tensors $D^{2} h$,  $D^{3} h$, and $D^{4} h$ evaluated along the flow $\Phi^t$. In the final step we derive a fourth order splitting scheme for the discretization of the ordinary differential equation~\eref{Lambda}, which is then applied for numerical test calculations. 

This paper is organized as follows. \Sref{corrections} develops Egorov's theorem to the next order with respect to the parameter $\eps$ and formulates this correction as a first order ordinary differential equation. \Sref{discretization} discusses a discretization of this corrected approximation for the computation of expectation values. \Sref{numerical} provides numerical experiments for a two-dimensional torsional quantum system, which confirm the theoretical considerations. The appendix summarizes basic properties of Wigner functions and Weyl operators. 

\section{Higher order Corrections}\label{corrections}

Let $h:\R^{2d}\to\R$ be a smooth function of subquadratic growth. That is, for all $\gamma\in\N^{2d}$ with $|\gamma|\ge2$ there exists $C_\gamma>0$ with 
\begin{equation*}
\|D^\gamma h\|\le C_\gamma.
\end{equation*} 
Let $a:\R^{2d}\to\R$ be a Schwartz function. Then, the following formal considerations can be turned into a proof according to \cite[Th\'{e}or\`{e}me IV.10]{Ro87} or \cite[Theorem~1.2]{BR02}. 

We look for an approximate classical observable
$a_{\rm appr}(t):\R^{2d}\to\R$ such that 
\begin{equation*}
\rme^{\rmi Ht/\eps} \op(a) \rme^{-\rmi Ht/\eps}\approx \op(a_{\rm appr}(t)).
\end{equation*}
We require $a_{\rm appr}(0)=a$ at time $t=0$, rewrite the difference according to
\begin{eqnarray*}
\lefteqn{\rme^{\rmi Ht/\eps} \op(a) \rme^{-\rmi Ht/\eps} - \op(a_{\rm appr}(t))
= \int_0^t \frac{\rmd}{\rmd\tau} \left( \rme^{\rmi H\tau/\eps} \op(a_{\rm appr}(t-\tau)) \rme^{-\rmi H\tau/\eps}\right) \rmd\tau}\\
&=& \int_0^t \rme^{\rmi H\tau/\eps} \left( \frac{\rmi}{\eps}[\op(h),\op(a_{\rm appr}(t-\tau))] - 
\frac{\rmd}{\rmd t}\,\op(a_{\rm appr}(t-\tau)) \right) \rme^{-\rmi H\tau/\eps}\rmd\tau,
\end{eqnarray*}
and observe that the commutator 
\begin{equation*}
[\op(h),\op(a_{\rm appr}(t))] = \op(h)\op(a_{\rm appr}(t))-\op(a_{\rm appr}(t))\op(h)
\end{equation*} 
plays a crucial rule. Multiplying this commutator with $-\rmi \eps$, we obtain an asymptotic expansion in even powers of $\eps$, 
\begin{equation}\label{eq:ascomm}
\frac{\rmi}{\eps}[\op(h),\op(a_{\rm appr}(t))] \sim 
\sum_{k\in2\N} \left(\frac{\eps}{2i}\right)^k \op(\{h,a_{\rm appr}(t)\}_{k+1}),
\end{equation}
where
\begin{equation*}
		\{f,g\}_k = 
			\sum_{|\alpha+\beta|=k} \frac{(-1)^{|\beta|}}{\alpha! \beta!} \partial_q^\alpha \partial_p^\beta g\, \partial_q^\beta \partial_p^\alpha f. 
\end{equation*}
denotes the $k$th generalized Poisson bracket of two smooth functions $f,g:\R^{2d}\to\R$, see \ref{app:wigner}. Since the first generalized Poisson bracket coincides with the usual Poission bracket, $\{f,g\}_1 = \partial_p f\partial_q g - \partial_q f\partial_p g$, we have
\begin{equation*}
\frac{\rmi}{\eps}[\op(h),\op(a_{\rm appr}(t))] = \op(\{h,\op(a_{\rm appr}(t)\}) + O(\eps^2).
\end{equation*} 
Let $\Phi^t:\R^{2d}\to\R^{2d}$ be the flow associated with the classical Hamilton function $h$, and set 
$a_{\rm appr}(t)=a\circ\Phi^t$. Then, 
\begin{equation*}
\case{\rmd}{\rmd t}\, a_{\rm appr}(t) = \{ h,a_{\rm appr}(t)\},
\end{equation*} 
and we obtain Egorov's theorem,  
\begin{equation*}
\rme^{\rmi Ht/\eps} \op(a) \rme^{-\rmi Ht/\eps} = \op(a\circ\Phi^t) + O(\eps^2).
\end{equation*}

\subsection{The first correction}
The asymptotic commutator expansion \eref{eq:ascomm} allows to systematically derive higher order corrections to Egorov's theorem. For constructing the first correction, we set $a_{\rm appr}(t) = a_0(t) + \eps^2 a_2(t)$. The computation 
\begin{equation*}
\frac{\rmi}{\eps}[\op(h),\op(a_{\rm appr}(t))] = \op(\{h,a_{\rm appr}(t)\}) - \frac{\eps^2}{4}\, \op(\{h,a_0(t)\}_3)+ O(\eps^4)
\end{equation*}
suggests to choose
\begin{equation*}
a_0(t) = a\circ\Phi^t,\qquad a_2(t) = -\case14 \int_0^t \{h,a_0(\tau)\}_{3}\circ\Phi^{t-\tau} \rmd\tau.
\end{equation*} 
Indeed, we compute the time derivative,
\begin{eqnarray*}
		\frac{\rmd}{\rmd t} a_2(t) &=&   -\case14 \left( \{ h,a_{0}(t)\}_{3} 
			+	\int_0^t \frac{\rmd}{\rmd t}\left(\{h, a_{0}(\tau)\}_{3}  \circ \Phi^{t-\tau}\right)\rmd\tau  \right)
			\\&=& -\case14 \left( \{ h,a_{0}(t)\}_{3} 
			+	\int_0^t \{h,\{h, a_{0}(\tau)\}_{3}\}   \circ \Phi^{t-\tau}\rmd\tau  \right)
			\\&=&   -\case14 \left( \{ h,a_{0}(t)\}_{3} 
			+	\{h,\int_0^t\{h, a_{0}(\tau)\}_{3}   \circ \Phi^{t-\tau}\rmd\tau\}  \right),
\end{eqnarray*}
where the last equation uses that the classical flow as a symplectic transformation of phase space preserves the Poisson bracket.
We therefore obtain
\begin{equation*}
\frac{\rmd}{\rmd t} a_{\rm appr}(t) = 
\{h,a_{\rm appr}(t)\} - \frac{\eps^2}{4} \{h,a_0(t)\}_{3}.
\end{equation*}
Consequently,
\begin{equation*}
\rme^{\rmi Ht/\eps} \op(a) \rme^{-\rmi Ht/\eps} = \op(a_{\rm appr}(t)) + O(\eps^4).
\end{equation*} 

\begin{remark}\label{rem:inv} If $a:\R^{2d}\to\R$ is an observable invariant along the Hamiltonian flow, that is, $a\circ\Phi^t=a$ for all $t\in\R$, and additionally $\{h,a\}_3=0$, then the corrected approximation is also invariant, 
\begin{equation*}
a_{\rm appr}(t) = a\circ\Phi^t - \case{\eps^2}{4}\int_0^t \{h,a\circ\Phi^\tau\}_3\circ\Phi^{t-\tau} \rmd \tau = a
\end{equation*}
for all $t\in\R$. In particular, mass ($a=1$) and energy ($a=h$) are conserved.  
\end{remark}

\subsection{Ordinary Differential Equations for the Correction}

Our next aim is to reformulate the correction as
\begin{equation*}
a_2(t) = -\case{1}{4} \left( (D^3a\circ\Phi^t)_{ijk} \Lambda^t_{kji} + 3(D^2a\circ\Phi^t)_{ij}\Gamma_{ji}^t + (Da\circ\Phi^t)_i \Xi^t_i \right),
\end{equation*}
where the components of the time-dependent tensors $\Lambda^t$, $\Gamma^t$, and $\Xi^t$ satisfy a first order system of coupled ordinary differential equations, that is independent of the observable~$a$ and can be efficiently solved alongside the Hamiltonian flow $\Phi^t$. Here and in the following, we use Einstein's summation convention for notational brevity. 

Let 
\begin{equation*}
J = \left(\begin{array}{cc}0 & \Id\\ -\Id & 0\end{array}\right)\in\R^{2d\times 2d}.
\end{equation*}
We observe that we can write
\begin{eqnarray*}
		a_2(t) &=& -\case14 \int_0^t \{h,a\circ\Phi^\tau\}_{3}\circ\Phi^{t-\tau} \rmd\tau\\
		&=& -\case{1}{4}\int_0^t \left( \left(D^3(a\circ \Phi^\tau)\right)_{lmn} 
				 \left(J\widetilde{D^3h}\right)_{nml}	\right) \circ	\Phi^{t-\tau} \rmd\tau,
\end{eqnarray*}				 
where the 3-tensor $\left(J\widetilde{D^3h}\right)$ is defined by 
\begin{eqnarray*} 
		\eqalign{
		\left(\widetilde{D^3h}\right)_{ijk}= 
			\cases{
  			\case{1}{6} (D^3h)_{ijk},  & $i=j=k$\\
  			\case{1}{2} (D^3h)_{ijk}, & $i=j\neq k$ or $i=k\neq j$ or $k=j\neq i$ \\
  			(D^3h)_{ijk} , & else  		
			}}
\end{eqnarray*}
and 
\begin{equation}\label{def_JD3tilde} 
\left(J\widetilde{D^3h}\right)_{ijk} = J_{il} J_{jm} J_{kn} \left(\widetilde{D^3h}\right)_{lmn}.
\end{equation}
A closer look at this 3-tensor reveals that it is symmetric:

\begin{lemma}
\label{lem_symm_JD3h}
	Let $k\in\N$ and $A=(a)_{i_1 \cdots i_k} \in \mathbb{R}^{2d \times \cdots \times 2d}$ be a $k$-tensor.  Then, 
	\begin{equation*}
	J_{i_1 l_1} \cdots J_{i_k l_k} A_{l_1 \cdots l_k}\;\mbox{is symmetric}\;\Leftrightarrow 
	A \;\mbox{is symmetric}.
	\end{equation*}
In particular, the 3-tensor $\left(J\widetilde{D^3h}\right)$ defined in \eref{def_JD3tilde} is symmetric. 	 
\end{lemma}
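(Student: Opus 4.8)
The plan is to isolate the essential structure shared by the two sides of the equivalence: both the operation $A \mapsto B$ with $B_{i_1\cdots i_k} = J_{i_1 l_1}\cdots J_{i_k l_k} A_{l_1\cdots l_k}$ and the notion of symmetry are governed by the action of the symmetric group $S_k$ permuting tensor slots. Recall that a $k$-tensor $A$ is symmetric precisely when $A_{i_{\sigma(1)}\cdots i_{\sigma(k)}} = A_{i_1\cdots i_k}$ for every $\sigma\in S_k$ (it would suffice to check adjacent transpositions, but I would keep all of $S_k$ for uniformity). Writing $\Psi$ for the linear map $A\mapsto B$ and $P_\sigma$ for the slot permutation $(P_\sigma A)_{i_1\cdots i_k} = A_{i_{\sigma(1)}\cdots i_{\sigma(k)}}$, the heart of the matter is a single commutation identity: $\Psi$ commutes with every $P_\sigma$.

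First I would prove this commutation. Fix $\sigma\in S_k$ and compute $B_{i_{\sigma(1)}\cdots i_{\sigma(k)}}$ directly from the definition of $\Psi$. The only non-routine move is relabelling the $k$ summation indices $l_1,\dots,l_k$ via $l_j = m_{\sigma(j)}$; since $\sigma$ is a bijection this is a legitimate renaming of dummy indices. Because the \emph{same} matrix $J$ is contracted into every slot, the product $\prod_j J_{i_{\sigma(j)}\, m_{\sigma(j)}}$ reindexes back to $J_{i_1 m_1}\cdots J_{i_k m_k}$, while $A_{m_{\sigma(1)}\cdots m_{\sigma(k)}}$ is exactly $(P_\sigma A)_{m_1\cdots m_k}$. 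This yields $\Psi(P_\sigma A) = P_\sigma(\Psi A)$. I expect this index bookkeeping to be the only genuinely delicate point, and it is precisely where the hypothesis that one common $J$ acts on all indices enters; had different matrices been used on different slots, the reindexing would fail.

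With the commutation relation in hand, both implications are immediate. If $A$ is symmetric then $P_\sigma(\Psi A) = \Psi(P_\sigma A) = \Psi A$ for all $\sigma$, so $B=\Psi A$ is symmetric. Conversely, since $J$ is invertible with $J^{-1} = -J$, the map $\Psi$ is a linear isomorphism; if $B$ is symmetric then $\Psi(P_\sigma A) = P_\sigma(\Psi A) = P_\sigma B = B = \Psi A$, and injectivity of $\Psi$ forces $P_\sigma A = A$ for every $\sigma$, that is, $A$ is symmetric.

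Finally, for the stated application I would verify that $\widetilde{D^3h}$ is symmetric and then invoke the equivalence with $A = \widetilde{D^3h}$ and $k=3$. Smoothness of $h$ makes $D^3h$ fully symmetric, since mixed partial derivatives commute, and the scalar weights $1/6$, $1/2$, $1$ attached in the definition of $\widetilde{D^3h}$ depend only on the coincidence pattern of the index triple $(i,j,k)$ --- all three equal, exactly two equal, or all distinct --- which is unchanged under permuting $i,j,k$. Hence $\widetilde{D^3h}$ inherits the symmetry of $D^3h$, and the lemma gives at once that $J\widetilde{D^3h}$ is symmetric.
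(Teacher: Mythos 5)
Your proof is correct, but it takes a genuinely different route from the paper's. The paper exploits the explicit block structure of the symplectic matrix $J$: contracting $J$ into the $m$-th slot merely shifts that index by $\pm d$ and introduces a sign, so that $J_{i_1 l_1}\cdots J_{i_k l_k}A_{l_1\cdots l_k} = (-1)^{\alpha_1}\cdots(-1)^{\alpha_k}A_{\beta_1\cdots\beta_k}$ with $(\alpha_j,\beta_j)$ determined slot-by-slot by whether $i_j\le d$; a permutation of $(i_1,\ldots,i_k)$ then simply permutes the $(\alpha_j)$ and $(\beta_j)$, which yields both directions of the equivalence at once. You instead prove the abstract commutation relation $\Psi P_\sigma = P_\sigma \Psi$ between the contraction map $\Psi$ (contraction with the same matrix $J$ in every slot) and the slot-permutation operators $P_\sigma$, via relabelling of dummy indices, and then obtain the forward implication immediately and the converse from injectivity of $\Psi$, using $J^{-1}=-J$. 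What each approach buys: yours is more general, since it works for contraction with \emph{any} invertible matrix and never uses the specific form of $J$, while the paper's is more concrete and self-contained, reducing everything to signed index shifts. A side benefit of your write-up is that you explicitly verify the symmetry of $\widetilde{D^3h}$ (full symmetry of $D^3h$ by equality of mixed partials for smooth $h$, plus permutation-invariance of the weights $1/6$, $1/2$, $1$, which depend only on the coincidence pattern of the index triple); the paper leaves this step implicit when deducing the ``in particular'' statement, so your version actually closes a small gap.
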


\begin{proof}
	For every $m\in \{1,\ldots,k\}$
	\begin{eqnarray*}
		J_{i_m l} A_{i_1 \cdots i_{m-1} l i_{m+1} \ldots i_k} = 
			\cases{ 
				A_{i_1 \cdots i_{m-1} (i_m+d)  i_{m+1} \ldots i_k}, &$i_m \leq d$ \\
				-A_{i_1 \cdots i_{m-1} (i_m-d)  i_{m+1} \ldots i_k}, &else
			}.
	\end{eqnarray*}
Let $1\le j\le k$. For $i_j\le d$, we set $\alpha_j=0$, $\beta_j=i_j+d$ and otherwise $\alpha_j=1$, $\beta_j=i_j-d$. Then,   
	\begin{eqnarray*}
		J_{i_1 l_1} \cdots J_{i_k l_k} A_{l_1 \cdots l_k} = (-1)^{\alpha_1} \cdots (-1)^{\alpha_k}  A_{\beta_1 \ldots \beta_k}.	
	\end{eqnarray*}
	Hence, a permutation of $(i_1,\ldots,i_k)$ results in a permutation of $(\alpha_1,\ldots, \alpha_k)$ and $(\beta_1,\ldots, \beta_k)$. 
	Thus, $J_{i_1 l_1} \cdots J_{i_k l_k} A_{l_1 \cdots l_k}$ is symmetric if and only if $A$ is symmetric. 
\end{proof}

We compute the third derivative of $a\circ\Phi^\tau$ by the chain rule, where we denote the derivatives of $\Phi^\tau$ by
$\partial_{i_k}\cdots\partial_{i_1} \Phi^\tau_j=(D^k\Phi^\tau)_{ji_1 \cdots i_k}$,
\begin{eqnarray*}
		\fl \left(D^3(a\circ \Phi^\tau)\right)_{lmn} =& (D^3a \circ \Phi^\tau)_{ijk} (D\Phi^\tau)_{il}  (D\Phi^\tau)_{jm}  (D\Phi^\tau)_{kn} 
			\\&+ (D^2a \circ \Phi^\tau)_{ij} (D^2\Phi^\tau)_{ilm}  (D\Phi^\tau)_{jn} 
				 +	(D^2a)_{ij} (D^2\Phi^\tau)_{iln}  (D\Phi^\tau)_{jm} 
		  \\&+ (D^2a \circ \Phi^\tau)_{ij} (D^2\Phi^\tau)_{imn}  (D\Phi^\tau)_{jl} 
			   + (Da \circ \Phi^\tau)_{i} (D^3\Phi^\tau)_{ilmn}. 
	\end{eqnarray*}
Since the 3-tensor $\left(J\widetilde{D^3h}\right)$ is symmetric and $\Phi^\tau \circ \Phi^{t-\tau}= \Phi^t$, we obtain
the reformulation
	\begin{eqnarray*}
		a_2(t) =-\case{1}{4} \left( \left( D^3a \circ \Phi^t \right)_{ijk} \Lambda^t_{kji} + 
			3 \left( D^2a \circ \Phi^t \right)_{ij}  \Gamma^t_{ji} + \left( Da \circ \Phi^t \right)_{i} \Xi^t_i \right)
	\end{eqnarray*}			
	with
	\begin{equation}\label{eq:deften}
	\eqalign{		 
		\Lambda^t_{ijk} &= \int_{0}^t \left[ \left( D\Phi^\tau \right)_{il}  \left( D\Phi^\tau \right)_{jm}  
																			\left(D\Phi^\tau \right)_{kn} 
					\left(J\widetilde{D^3h}\right)_{nml} \right] \circ \Phi^{t-\tau} \rmd\tau , 
		\\ \Gamma^t_{ij} &= \int_{0}^t \left[ \left( D^2\Phi^\tau \right)_{ikl} \left( D\Phi^\tau \right)_{jm} 
					\left(J\widetilde{D^3h}\right)_{mlk} \right] \circ \Phi^{t-\tau} \rmd\tau ,
		\\ \Xi^t_{i} &= \int_{0}^t \left[ \left( D^3\Phi^\tau \right)_{ijkl} 
					\left(J\widetilde{D^3h}\right)_{lkj} \right] \circ \Phi^{t-\tau} \rmd\tau. }
	\end{equation}

Next, we compute the time derivatives of integrals, which are of the form observed in the defining equations of the time-dependent tensors $\Lambda^t$, $\Gamma^t$, and $\Xi^t$.

\begin{lemma}
\label{Lem_main_pde}
	Let $b:\R^{2d}\to\R$ and $f:\mathbb{R} \times \mathbb{R}^{2d} \rightarrow \mathbb{R}$ smooth functions. Then, 
	\begin{equation*}
		\fl\frac{\rmd}{\rmd t} \int_0^t \left( b f(\tau) \right) \circ \Phi^{t-\tau} \rmd\tau =
		\int_0^t \left( b \,\frac{\rmd}{\rmd\tau} f(\tau) \right) \circ \Phi^{t-\tau} \rmd\tau
		+  \left( bf(0) \right) \circ \Phi^t.
	\end{equation*}
\end{lemma}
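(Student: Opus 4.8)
The plan is to reduce this to the elementary Leibniz rule for an integral with a variable upper limit, after first disentangling the two distinct ways in which the parameter $t$ enters the integrand: it appears both in the upper limit and inside the flow argument $\Phi^{t-\tau}$. The key preparatory move is the change of variables $\sigma = t-\tau$, which transfers the entire $t$-dependence of the flow onto the time slot of $f$, leaving $\Phi^\sigma$ free of $t$.

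First I would substitute $\sigma = t-\tau$ to rewrite
\[
\int_0^t \left( b\,f(\tau) \right)\circ\Phi^{t-\tau}\,\rmd\tau = \int_0^t \left( b\,f(t-\sigma) \right)\circ\Phi^{\sigma}\,\rmd\sigma .
\]
Here $b$ is fixed and $\Phi^\sigma$ no longer carries any $t$, so the only $t$-dependence of the integrand sits in $f(t-\sigma)$ together with the upper limit $\sigma=t$. Writing $G(t,\sigma) = \left( b\,f(t-\sigma) \right)\circ\Phi^\sigma$, the integral becomes $\int_0^t G(t,\sigma)\,\rmd\sigma$, to which I apply the Leibniz rule
\[
\frac{\rmd}{\rmd t}\int_0^t G(t,\sigma)\,\rmd\sigma = G(t,t) + \int_0^t \partial_t G(t,\sigma)\,\rmd\sigma .
\]

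The boundary term is evaluated at $\sigma=t$, where $f(t-\sigma)=f(0)$ and $\Phi^\sigma=\Phi^t$, hence $G(t,t) = \left( b\,f(0) \right)\circ\Phi^t$, which is exactly the claimed boundary contribution. For the interior term, since $b$ and $\Phi^\sigma$ are independent of $t$, the chain rule gives $\partial_t G(t,\sigma) = \left( b\,\partial_1 f(t-\sigma) \right)\circ\Phi^\sigma$, where $\partial_1 f = \frac{\rmd}{\rmd\tau}f(\tau)$ denotes the time derivative; note that $\partial_t(t-\sigma)=1$ introduces no sign. Undoing the substitution $\sigma = t-\tau$ in this integral then returns $\int_0^t \left( b\,\frac{\rmd}{\rmd\tau}f(\tau) \right)\circ\Phi^{t-\tau}\,\rmd\tau$, which completes the identity.

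The only point requiring care is the justification of differentiation under the integral sign, and this is where I expect the main (though routine) technical work to lie. One must verify that $G$ and $\partial_t G$ are jointly continuous, which follows from the smoothness of $b$, $f$ and the flow map $\sigma\mapsto\Phi^\sigma$, and that they remain uniformly bounded for $(t,\sigma)$ in a compact neighbourhood of the point of interest, so that the hypotheses of the Leibniz rule are met on the finite interval of integration. Under the stated smoothness assumptions these estimates are immediate on bounded sets, and no further structural input about the Hamiltonian or the tensors $\Lambda^t$, $\Gamma^t$, $\Xi^t$ is needed; the essential content of the lemma is purely the bookkeeping of the two $t$-dependencies, handled cleanly by the substitution.
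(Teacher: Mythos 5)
Your proof is correct, but it takes a genuinely different route from the paper. You first perform the substitution $\sigma = t-\tau$, which moves the entire $t$-dependence out of the flow and into the time slot of $f$; the Leibniz rule then only produces the boundary term $G(t,t)=(bf(0))\circ\Phi^t$ and a $t$-derivative that hits nothing but $f$, so no spatial derivatives of $b$, $f$, or $\Phi^\sigma$ ever appear. The paper instead differentiates in the original variables: the Leibniz rule there yields the boundary term $bf(t)$ (at $\tau=t$, using $\Phi^0=\mathrm{Id}$) plus interior terms involving $(Db)^T$, $(Df(\tau))^T$ and $\frac{\rmd}{\rmd t}\Phi^{t-\tau}$; these are then recognized as the difference between $\bigl(b\,\frac{\rmd}{\rmd\tau}f(\tau)\bigr)\circ\Phi^{t-\tau}$ and the total $\tau$-derivative $\frac{\rmd}{\rmd\tau}\bigl((bf(\tau))\circ\Phi^{t-\tau}\bigr)$, and the latter integrates by the fundamental theorem of calculus to $bf(t)-(bf(0))\circ\Phi^t$, cancelling the boundary term and leaving the claimed identity. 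Both arguments exploit the same structural fact --- that the flow enters only through the difference $t-\tau$, so $\partial_t\Phi^{t-\tau}=-\partial_\tau\Phi^{t-\tau}$ --- but your substitution consumes that fact up front and makes the rest purely mechanical, whereas the paper's computation pays for working in place with a longer chain-rule bookkeeping and the recombination step. Your version is shorter, avoids all derivative clutter, and (as you note) needs only smoothness plus routine justification of differentiation under the integral sign, which holds pointwise in $z$ on the finite interval; neither proof needs the group property of the flow, and yours does not even use $\Phi^0=\mathrm{Id}$.
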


\begin{proof}
We start with
\begin{eqnarray*}
\fl\frac{\rmd}{\rmd t} \int_0^t \left( b f(\tau) \right) \circ \Phi^{t-\tau} \rmd\tau 
		= b f(t) 
			+ 	\int_0^t b \circ \Phi^{t-\tau} 
			(Df(\tau))^T \circ \Phi^{t-\tau} \frac{\rmd}{\rmd t} \Phi^{t-\tau} \rmd\tau
			\\ 
			+		\int_0^t (Db)^T \circ \Phi^{t-\tau} \frac{\rmd}{\rmd t} \Phi^{t-\tau} 
				f(\tau)\circ \Phi^{t-\tau} \rmd\tau .
\end{eqnarray*}
For introducing the $\tau$-derivative in the integral, we compute
	\begin{eqnarray*}
\fl		 \frac{\rmd}{\rmd\tau} \left( b  f(\tau) \circ \Phi^{t-\tau} \right)   =
			b \circ \Phi^{t-\tau} \left((\frac{\rmd}{\rmd\tau} f(\tau))\circ \Phi^{t-\tau} 
			- (Df(\tau))^T \circ  \Phi^{t-\tau}	\frac{\rmd}{\rmd t} \Phi^{t-\tau} \right)
			\\
			- (Db)^T \circ \Phi^{t-\tau} \frac{\rmd}{\rmd t}  \Phi^{t-\tau} 
				f(\tau)\circ \Phi^{t-\tau}.
	\end{eqnarray*}
	Therefore, 
	\begin{eqnarray*}
\fl \frac{\rmd}{\rmd t} \int_0^t \left( b f(\tau) \right) \circ \Phi^{t-\tau} \rmd\tau\\		
		=	b f(t) + \int_0^t \left(b \frac{\rmd}{\rmd\tau} f(\tau)\right)\circ \Phi^{t-\tau} \rmd\tau
			- \int_0^t \frac{\rmd}{\rmd\tau} \left(\left( b
		 	f(\tau) \right)\circ \Phi^{t-\tau} \right)  \rmd\tau
		\\
		=
		 	\int_0^t \left(b  \frac{\rmd}{\rmd\tau} f(\tau) \right)\circ \Phi^{t-\tau} \rmd\tau
		 	+ \left(b f(0) \right) \circ \Phi^t  .
	\end{eqnarray*}
\end{proof}

Now we are ready to formulate and prove our first main result, the explicit system of ordinary differential equations describing the second order correction to Egorov's theorem. In M.~Zworski's recent monograph, the higher order 
terms are referred to ``as difficult to compute'', see \cite[\S11.1]{Zw12}. Our result shows nonetheless, that the computation is feasible. 

\begin{theorem}[Second Correction]\label{ode_thm_odes}
Let $h:\R^{2d}\to\R$ be a smooth function of subquadratic growth, $t\in\R$, and $\Phi^t:\R^{2d}\to\R^{2d}$ the Hamiltonian flow associated with $h$. Let $a:\R^{2d}\to\R$ be a Schwartz function. Then, there exists a constant $C=C(a,h,t)>0$ such that for all $\eps>0$
\begin{equation*}
\left\| \rme^{\rmi \op(h)t/\eps}\op(a)\rme^{-\rmi \op(h)t/\eps} - \op(a_0(t) + \eps^2 a_2(t))\right\| \le C\eps^4
\end{equation*}
where $a_0(t)=a\circ\Phi^t$ and 
\begin{equation*}
a_2(t) = -\case{1}{4} \left( \left( D^3a \circ \Phi^t \right)_{ijk} \Lambda^t_{kji} + 
			3 \left( D^2a \circ \Phi^t \right)_{ij}  \Gamma^t_{ji} + \left( Da \circ \Phi^t \right)_{i} \Xi^t_i \right)
\end{equation*} 
and the functions $\Lambda^t_{ijk}$, $\Gamma^t_{ij}$, $\Xi^t_{i}$, $i,j,k=1,\ldots,2d$, solve the ordinary differential system
	\begin{equation}\label{odes_first_approach}
	\eqalign{
		\frac{\rmd}{\rmd t}\Lambda^t_{ijk} = M^t_{il} \Lambda^t_{ljk} + M^t_{jl} \Lambda^t _{ilk} 
									+ M^t_{kl} \Lambda^t_{ijl} +  C^1_{ijk}(t), \\
		\frac{\rmd}{\rmd t}\Gamma^t_{ij} = \left( C^2_i(t) \right)_{kl} \Lambda^t_{lkj} + M^t_{il} \Gamma^t _{lj} 
									+ M^t_{jl} \Gamma^t_{il}, \\
		\frac{\rmd}{\rmd t}\Xi^t_{i} = \left( C^3_i(t) \right)_{jkl} \Lambda^t_{lkj} 
									+  3\left( C^2_i(t) \right)_{jk} \Gamma^t _{kj}+ M^t_{il} \Xi^t_{l}, \\
		\Lambda^0_{ijk}=\Gamma^0_{ij}=\Xi^0_{i}=0 , 
	}
	\end{equation}
	with $M^t = J\cdot D^2h \circ \Phi^t$ and 
	\begin{eqnarray*}
	C^1_{ijk}(t)  = \left( J\widetilde{D^3h} \circ	\Phi^{t} \right)_{ijk},\qquad
	\left( C^2_i(t) \right)_{jk} 
	= \left( J\cdot D^3h \circ \Phi^t \right)_{ijk},
	\\ \left( C^3_i(t) \right)_{jkl} = \left( J\cdot D^4h \circ \Phi^t \right)_{ijkl},
	\end{eqnarray*}
	where for a matrix $A\in \mathbb{R}^{2d \times 2d}$ and $k$-tensor $B\in\mathbb{R}^{2d \times \cdots \times 2d}$, $A\cdot B\in\mathbb{R}^{2d \times \cdots \times 2d}$ is given by 
	$(A\cdot B)_{i_1 \cdots i_k}=A_{i_1 s} B_{s i_2 \cdots i_k}$ and $J\widetilde{D^3h}$ is defined by (\ref{def_JD3tilde}). 
\end{theorem}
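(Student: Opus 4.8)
The plan is to split the statement into three parts: the $O(\eps^4)$ operator-norm estimate, the representation of $a_2(t)$ through the tensors $\Lambda^t$, $\Gamma^t$, $\Xi^t$, and the claim that these tensors solve \eref{odes_first_approach}. The first two are already in hand: the error bound follows from the formal commutator computation carried out above together with the rigorous remainder estimates of \cite{Ro87,BR02}, and the representation of $a_2(t)$ through the integrals in \eref{eq:deften} was obtained from the chain rule applied to $D^3(a\circ\Phi^\tau)$ and the symmetry of $J\widetilde{D^3h}$ from Lemma \ref{lem_symm_JD3h}. Hence the only genuinely new task is to verify that the tensors defined by \eref{eq:deften} satisfy the differential system, and for this I would differentiate the integrals in $t$. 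The initial conditions $\Lambda^0=\Gamma^0=\Xi^0=0$ are immediate because the integration ranges collapse to a point.

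For the differentiation I would apply Lemma \ref{Lem_main_pde} to each integral in \eref{eq:deften}, taking $b$ to be the corresponding component of $J\widetilde{D^3h}$ (which carries no $\tau$-dependence) and $f(\tau)$ the product of flow derivatives $D\Phi^\tau$, $D^2\Phi^\tau$, $D^3\Phi^\tau$. The $\tau$-derivatives of these factors come from the variational equations obtained by differentiating $\frac{\d}{\d\tau}\Phi^\tau = J\nabla h\circ\Phi^\tau$ with respect to the initial point: one derivative gives $\frac{\d}{\d\tau}(D\Phi^\tau)_{il} = (JD^2h\circ\Phi^\tau)_{ir}(D\Phi^\tau)_{rl}$, and repeated differentiation yields the corresponding identities for $D^2\Phi^\tau$ and $D^3\Phi^\tau$, each time producing one extra factor $JD^{k}h\circ\Phi^\tau$ by the chain rule. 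The Leibniz rule applied to the products then generates the individual terms of the right-hand sides.

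The decisive simplification is the flow identity $\Phi^\tau\circ\Phi^{t-\tau}=\Phi^t$. After differentiating and composing with $\Phi^{t-\tau}$, every factor $JD^2h\circ\Phi^\tau$, $JD^3h\circ\Phi^\tau$, $JD^4h\circ\Phi^\tau$ turns into $JD^2h\circ\Phi^t=M^t$, $C^2(t)$, $C^3(t)$ respectively; being independent of $\tau$ these pull out of the integral, and the factor left under the integral is recognized as $\Lambda^t$, $\Gamma^t$, or $\Xi^t$ after relabelling indices and using that $J\widetilde{D^3h}$ is symmetric (Lemma \ref{lem_symm_JD3h}), whence $\Lambda^t$ is itself totally symmetric. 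The inhomogeneous term $C^1_{ijk}(t)$ appears only for $\Lambda^t$: it is the boundary contribution $(bf(0))\circ\Phi^t$ of Lemma \ref{Lem_main_pde}, which for $\Gamma^t$ and $\Xi^t$ vanishes since $D^2\Phi^0=D^3\Phi^0=0$, whereas for $\Lambda^t$ the identity $D\Phi^0=\Id$ and the symmetry of $J\widetilde{D^3h}$ give exactly $(J\widetilde{D^3h}\circ\Phi^t)_{ijk}$.

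I expect the main obstacle to be the index bookkeeping in the $\Xi^t$-equation, where the third-order variational equation must be assembled correctly. Differentiating three times produces one $JD^4h$-term, three $JD^3h$-terms, and one $JD^2h$-term; the $JD^4h$-term gives $(C^3_i(t))_{jkl}\Lambda^t_{lkj}$, the $JD^2h$-term gives $M^t_{il}\Xi^t_l$, and the three $JD^3h$-terms must all be matched to $\Gamma^t$. Here I would use that $D^3h$ is symmetric, hence $C^2$ is symmetric in its two free indices, so that $(C^2_i(t))_{jk}\Gamma^t_{jk}=(C^2_i(t))_{jk}\Gamma^t_{kj}$ upon contraction; this collapses the three contributions into the single coefficient $3$ in \eref{odes_first_approach}, even though $\Gamma^t$ itself need not be symmetric. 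The analogous but lighter bookkeeping for $\Gamma^t$ produces the cross term $(C^2_i(t))_{kl}\Lambda^t_{lkj}$ together with the two $M^t$-contractions.
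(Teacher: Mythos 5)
Your proposal is correct and follows essentially the same route as the paper's proof: initial values from the collapsing integration range, Lemma~\ref{Lem_main_pde} applied with $b$ a component of $J\widetilde{D^3h}$ and $f(\tau)$ the product of flow derivatives, the variational equations obtained by repeatedly differentiating $\frac{\rmd}{\rmd t}\Phi^t = J\,Dh\circ\Phi^t$, the identity $\Phi^\tau\circ\Phi^{t-\tau}=\Phi^t$ to pull $M^t$, $C^2(t)$, $C^3(t)$ out of the integrals, and the boundary terms handled via $D\Phi^0=\Id$, $D^2\Phi^0=D^3\Phi^0=0$. Your symmetry bookkeeping for the factor $3$ in the $\Xi^t$-equation (symmetry of $D^3h$, hence of $C^2_i(t)$, together with the total symmetry of $J\widetilde{D^3h}$) is exactly the mechanism the paper invokes via $(J\cdot D^3h\circ\Phi^\tau)_{i\mu\nu}=(J\cdot D^3h\circ\Phi^\tau)_{i\nu\mu}$, and your delegation of the $O(\eps^4)$ bound and the representation of $a_2(t)$ to the preceding text and to the cited remainder estimates matches the paper's own structure.
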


\begin{proof}
	The initial values $\Lambda^0_{ijk}=\Gamma^0_{ij}=\Xi^0_{i}=0$ are clear, since $\Lambda^t_{ijk}$, $\Gamma^t_{ij}$ and $\Xi^t_{i}$ are defined in \eref{eq:deften} as an integral from $0$ to $t$, over a smooth integrand. Next, we study the time derivative of $\Lambda^t_{ijk}$. For this, we write the Hamiltonian system as
\begin{equation}\label{eq:jham}
\frac{\rmd}{\rmd t} \Phi^t = J\, D h \circ \Phi^t,
\end{equation}	
and apply the differential operator $D$ to obtain the Jacobi stability equation
\begin{equation*}
\frac{\rmd}{\rmd t} ( D\Phi^t )_{ij}=\left( J \cdot D^2h \circ \Phi^t \right)_{im} ( D\Phi^t )_{mj}. 
\end{equation*}  
Then, by Lemma~\ref{Lem_main_pde}, 
	\begin{eqnarray*}
		\fl\frac{\rmd}{\rmd t}\Lambda^t_{ijk} &=& \int_0^t \left[ \frac{\rmd}{\rmd\tau} \left( \left( D\Phi^\tau \right)_{il}  \left( D\Phi^\tau \right)_{jm}  
												\left(D\Phi^\tau \right)_{kn} \right) \left(J\widetilde{D^3h}\right)_{lmn} \right] \circ \Phi^{t-\tau} \rmd\tau
		\\\fl&&+  
			\left[  \left( D\Phi^0 \right)_{il}  \left( D\Phi^0 \right)_{jm}  
												\left(D\Phi^0 \right)_{kn} \left(J\widetilde{D^3h}\right)_{lmn} \right] \circ \Phi^t
		\\\fl&=& 
			\int_0^t \left[  \left( J\cdot D^2h \circ \Phi^\tau \right)_{i\nu} \left( D\Phi^\tau \right)_{\nu l}  \left( D\Phi^\tau \right)_{jm}  
												\left(D\Phi^\tau \right)_{kn}  \left(J\widetilde{D^3h}\right)_{lmn} \right] \circ \Phi^{t-\tau} \rmd\tau
		\\\fl&&+ 
			\int_0^t \left[ \left( D\Phi^\tau \right)_{il} \left( J\cdot D^2h \circ \Phi^\tau \right)_{j\nu} \left( D\Phi^\tau \right)_{\nu m}  
												\left(D\Phi^\tau \right)_{kn} \left(J\widetilde{D^3h}\right)_{lmn} \right] \circ \Phi^{t-\tau} \rmd\tau
		\\\fl&&+ 
			\int_0^t \left[  \left( D\Phi^\tau \right)_{il}  \left( D\Phi^\tau \right)_{jm}  
												\left( J\cdot D^2h \circ \Phi^\tau \right)_{k\nu} \left(D\Phi^\tau \right)_{\nu n} 
												\left(J\widetilde{D^3h}\right)_{lmn} \right] \circ \Phi^{t-\tau} \rmd\tau
		\\\fl&&+   
			\left(J\widetilde{D^3h}\right)_{ijk} \circ \Phi^t.
	\end{eqnarray*}
	 So we can finish the proof for $\frac{\rmd}{\rmd t}\Lambda^t$ by using that $\Phi^\tau \circ \Phi^{t-\tau} = \Phi^t$. The proofs for $\frac{\rmd}{\rmd t}\Gamma^t$ and $\frac{\rmd}{\rmd t}\Xi^t$ are analogous: We differentiate the Jacobi stability equation to obtain
	\begin{eqnarray*}		
		\fl\frac{\rmd}{\rmd t} ( D^2\Phi^t )_{ijk}&=&\left( J \cdot D^3h \circ \Phi^t \right)_{imn} ( D\Phi^t )_{nk} ( D\Phi^t )_{mj} +  
				\left( J \cdot D^2h \circ \Phi^t \right)_{im} ( D^2\Phi^t )_{mjk},
		\\ 
		\fl\frac{\rmd}{\rmd t} ( D^3\Phi^t )_{ijkl}&=&\left( J \cdot D^4h \circ \Phi^t \right)_{imn\mu} ( D\Phi^t )_{n k} ( D\Phi^t )_{m j} ( D\Phi^t )_{\mu l} 
					\\&&+\left( J \cdot D^3h \circ \Phi^t \right)_{imn} ( D\Phi^t )_{mj} ( D^2\Phi^t )_{nkl}
				 	\\&&+ \left( J \cdot D^3h \circ \Phi^t \right)_{imn} ( D\Phi^t )_{nk} ( D^2\Phi^t )_{mjl}
				 	\\&&+ \left( J \cdot D^3h \circ \Phi^t \right)_{imn} ( D\Phi^t )_{nl} ( D^2\Phi^t )_{mjk}
					+\left( J \cdot D^2h \circ \Phi^t \right)_{im} ( D^3\Phi^t )_{mjkl} .
	\end{eqnarray*}	 
Moreover, we use 	 	  
	\begin{eqnarray*}
		\fl \frac{\rmd}{\rmd\tau} \left(\left( D^2\Phi^\tau \right)_{ikl} \left( D\Phi^\tau \right)_{jm} \right) \left(J\widetilde{D^3h}\right)_{mlk} 
		\\=
			\left[ (J\cdot D^3h \circ \Phi^\tau)_{i\mu \nu} (D\Phi^\tau)_{\nu l} (D\Phi^\tau)_{\mu k} 
			\left( D\Phi^\tau \right)_{jm} 
		\right. \\ \left. 
			+(J \cdot D^2h \circ \Phi^\tau)_{i\mu} (D^2\Phi)_{\mu kl} \left( D\Phi^\tau \right)_{jm} 
			 \right. \\ \left. 
			 + \left( D^2\Phi^\tau \right)_{ikl} \left( J\cdot D^2h \circ \Phi^\tau \right)_{j\mu} \left(D\Phi^\tau \right)_{\mu m} 
			 	\right] \left(J\widetilde{D^3h}\right)_{mlk},
	\end{eqnarray*}
	\begin{eqnarray*}
			\fl \frac{\rmd}{\rmd\tau} \left( D^3\Phi^\tau \right)_{ijkl} \left(J\widetilde{D^3h}\right)_{lkj}=&
				\left[(J\cdot D^4h \circ \Phi ^ \tau)_{i \mu \nu \eta} (D \Phi ^ \tau)_{\nu k} (D\Phi^\tau)_{\mu j} (D\Phi^\tau)_{\eta l}
		\right. \\ &\left.+ 3 (J\cdot D^3h \circ \Phi ^ \tau)_{i \mu \nu} (D\Phi ^ \tau)_{\nu k} (D^2 \Phi ^ \tau)_{\mu j l} 
		\right. \\ &\left.+ 
			 	(J\cdot D^2h\circ \Phi^\tau)_{i \mu} (D^3\Phi^\tau)_{\mu j k l} \right] \left(J\widetilde{D^3h}\right)_{lkj}.
	\end{eqnarray*}
as well as $(J\cdot D^3h \circ \Phi ^ \tau)_{i \mu \nu}=(J\cdot D^3h \circ \Phi ^ \tau)_{i \nu \mu}$ and $D^2 \Phi^0 = D^3 \Phi^0 = 0$. 
\end{proof}


\subsection{Vectorization: General Hamiltonians}\label{chapter_vec}

For the numerical simulation of the ordinary differential system \eref{odes_first_approach}, we vectorize the tensors. Recall, that for matrices $A \in \mathbb{R}^{n\times n}$ and $B \in \mathbb{R}^{m \times m}$, the Kronecker product $A\otimes B\in\R^{mn\times mn}$ is defined as  
\begin{equation*}
A\otimes B 
		= 
			\left(\begin{array}{ccc}
				A_{11} B & \cdots & A_{1n} B \\
				\vdots & & \vdots \\
				A_{n1} B & \cdots & A_{nn} B
			\end{array}\right).
\end{equation*}
That is, 
\begin{equation*}
(A\otimes B)_{ij}= A_{i_1 j_1} B_{i_2 j_2},\quad\mbox{if}\; i=(i_1-1)m+i_2, j=(j_1-1)m+j_2.
\end{equation*}
The vectorization of a k-tensor $C=(C)_{i_1 \cdots i_k}\in \mathbb{R}^{n_1 \times \cdots \times n_k}$ is defined as
	\begin{eqnarray*}
		\vec(C)=\left(\begin{array}{ccccccc} c_{1 \cdots 1} & c_{1 \cdots 12} & \cdots & c_{1 \cdots 1n_k} & c_{1 \cdots 121} & \cdots & c_{n_1 \cdots n_k}
			\end{array}\right)^T\in\R^{n_1\cdot\ldots\cdot n_k}.
	\end{eqnarray*}
That is, $\vec(C)_i = c_{i_1 \cdots i_k}$ if 	
	\begin{eqnarray*}
		i = (i_1-1) \prod_{l=2}^k n_l + (i_2-1) \prod_{l=3}^{k} n_l + \cdots + (i_{k-1}-1) n_k + i_k .
	\end{eqnarray*}
The following observation allows the vectorization of the products occuring on the right hand side of our differential equation \eref{odes_first_approach}.  

\begin{lemma}
	\label{prop_vectorisation}
	Let $A\in \mathbb{R}^{m \times m}$ a matrix, $B \in \mathbb{R}^{m \times \cdots \times m}$ an n-tensor, and $k\in\{1,\ldots,m\}$.
	Define the n-tensor $C\in \mathbb{R}^{m \times \cdots \times m}$ as 
	\begin{equation*}C_{i_1 \cdots i_n}=A_{i_k l} B_{i_1 \cdots i_{k-1} l i_{k+1} \cdots i_n} .\end{equation*} Then,  
	\begin{eqnarray*}
		\vec(C) = ( \overbrace{\Id_m \otimes \cdots \otimes \Id_m}^{k-1\:\hbox{ times}} 
			\otimes A \otimes \overbrace{\Id_m \otimes \cdots \otimes \Id_m}^{n-k \:\hbox{ times}} ) \cdot \vec(B).
	\end{eqnarray*}
\end{lemma}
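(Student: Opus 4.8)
The plan is to verify the claimed identity entrywise, comparing the $I$-th component of both sides for an arbitrary linear index $I\in\{1,\ldots,m^n\}$. First I would fix the bijection between a multi-index $(i_1,\ldots,i_n)\in\{1,\ldots,m\}^n$ and its linear image $I=(i_1-1)m^{n-1}+(i_2-1)m^{n-2}+\cdots+(i_{n-1}-1)m+i_n$ dictated by the definition of $\vec$ (with all $n_l=m$). Under this correspondence $\vec(C)_I=C_{i_1\cdots i_n}$, so by definition of the tensor $C$ the left-hand side is exactly $A_{i_kl}B_{i_1\cdots i_{k-1}li_{k+1}\cdots i_n}$, summed over $l$.

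The key ingredient is the entry formula for an $n$-fold Kronecker product. Writing $P=\Id_m\otimes\cdots\otimes\Id_m\otimes A\otimes\Id_m\otimes\cdots\otimes\Id_m$ with $A$ in the $k$-th slot, I would establish that, applying the same index bijection to both the row index $I$ and the column index $J\leftrightarrow(j_1,\ldots,j_n)$,
\[
P_{IJ}=\delta_{i_1j_1}\cdots\delta_{i_{k-1}j_{k-1}}\,A_{i_kj_k}\,\delta_{i_{k+1}j_{k+1}}\cdots\delta_{i_nj_n}.
\]
This follows by induction on the number $n$ of factors from the two-factor rule $(X\otimes Y)_{ij}=X_{i_1j_1}Y_{i_2j_2}$ stated before the lemma: the base case is that rule itself, and the inductive step peels off the leftmost factor, using that the linear index splits as $I=(i_1-1)m^{n-1}+I'$, where $I'$ is the linear index of $(i_2,\ldots,i_n)$ inside the $m^{n-1}\times m^{n-1}$ block $\Id_m\otimes\cdots\otimes A\otimes\cdots\otimes\Id_m$. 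Each identity factor thereby contributes a Kronecker delta, while the single $A$-factor contributes $A_{i_kj_k}$.

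With this formula the right-hand side becomes
\[
[P\,\vec(B)]_I=\sum_{j_1,\ldots,j_n}\delta_{i_1j_1}\cdots A_{i_kj_k}\cdots\delta_{i_nj_n}\,B_{j_1\cdots j_n}.
\]
The Kronecker deltas force $j_s=i_s$ for every $s\neq k$ and collapse all sums except the one over $j_k$, leaving $\sum_{l}A_{i_kl}B_{i_1\cdots i_{k-1}li_{k+1}\cdots i_n}$, which is precisely $C_{i_1\cdots i_n}=\vec(C)_I$. Since $I$ was arbitrary, the two vectors coincide.

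The only genuinely delicate point is the index bookkeeping in the multi-factor Kronecker entry formula, in particular keeping the offsets $(i_s-1)$ and the descending powers of $m$ mutually consistent through the inductive step. Everything else is a mechanical collapse of Kronecker deltas, so I expect no conceptual obstacle beyond careful indexing.
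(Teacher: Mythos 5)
Your proposal is correct and follows essentially the same route as the paper: both verify the identity entrywise by establishing the entry formula for the $n$-fold Kronecker product $\Id_m\otimes\cdots\otimes A\otimes\cdots\otimes\Id_m$ under the linear index bijection and then collapsing the Kronecker deltas against $\vec(B)$. The only difference is that you spell out an inductive proof of the Kronecker entry formula, which the paper simply asserts.
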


\begin{proof}
	We define the matrix $\widetilde{A}=( \overbrace{\Id_m \otimes \cdots \otimes \Id_m}^{k-1\:\hbox{ times}} 
			\otimes A \otimes \overbrace{\Id_m \otimes \cdots \otimes \Id_m}^{n-k \:\hbox{ times}} )$. 
	That is, 	
	\begin{eqnarray*}
		\widetilde{A}_{il}=
			\cases{
				A_{i_k l_k}, & if $i_\mu = l_\mu$ for all $\mu \in \{1,\cdots,k-1,k+1,\cdots,n\}$ 
				\\ 0, & else 
			}
	\end{eqnarray*}	
	for $i=(i_1-1)m^{n-1}+\cdots+(i_{n-1}-1)m + i_n$ and $l=(l_1-1)m^{n-1}+\cdots+(l_{n-1}-1)m + l_n$.		
  Then,  
	\begin{eqnarray*}
		\vec(C)_i = \widetilde{A}_{il} \cdot \vec(B)_l = A_{i_k l_k} B_{i_1 \cdots i_{k-1} l_k i_{k+1} \cdots i_n} = C_{i_1 \cdots i_n}
	\end{eqnarray*}
	for $i=(i_1-1)m^{n-1}+\cdots+(i_{n-1}-1)m + i_n$. 
\end{proof}

Now we reformulate the results of Theorem~\ref{ode_thm_odes} in vectorized form. 

\begin{corollary}[Vectorization]\label{thm_vec_odes}
Consider the time-dependent tensors $M^t$ and $C^j(t)$, $j=1,2,3$, together with the Hamiltonian flow $\Phi^t$ of 
Theorem~\ref{ode_thm_odes}. Let the functions $\Lambda^t_{ijk}$, $\Gamma^t_{ij}$ and $\Xi^t_{i}$ solve the ordinary differential system
	\begin{equation*}
	\eqalign{
		\frac{\rmd}{\rmd t}\Lambda^t_{ijk} = M^t_{il} \Lambda^t_{ljk} + M^t_{jl} \Lambda^t _{ilk} 
									+ M^t_{kl} \Lambda^t_{ijl} +  C^1_{ijk}(t),\\
		\frac{\rmd}{\rmd t}\Gamma^t_{ij} = \left( C^2_i(t) \right)_{kl} \Lambda^t_{lkj} + M^t_{il} \Gamma^t _{lj} 
									+ M^t_{jl} \Gamma^t_{il},\\
		\frac{\rmd}{\rmd t}\Xi^t_{i} = \left( C^3_i(t) \right)_{jkl} \Lambda^t_{lkj} 
									+  3\left( C^2_i(t) \right)_{jk} \Gamma^t _{kj}+ M^t_{il} \Xi^t_{l}.
	}
	\end{equation*}
Then, 
	\begin{equation}\label{vectorisation_odes_1}
	\eqalign{
		\fl\frac{\rmd}{\rmd t} \left(\begin{array}{c}
			\Phi^t \\ \vec(\Lambda^t) \\ \vec(\Gamma^t) \\ \Xi^t
		\end{array}\right)
		=
		\left(\begin{array}{cccc}
			\Id_{2d} & 0 & 0 & 0 \\ 0 & K^t & 0 & 0 \\ 0 & D^t & L^t & 0 
			\\ 0 & C^3_{\rm m}(t) & 3 C^2_{\rm m}(t) & M^t  
		\end{array}\right)
		\left(\begin{array}{c}
			J\cdot D h \circ \Phi^t \\ \vec(\Lambda^t) \\ \vec(\Gamma^t) \\ \Xi^t
		\end{array}\right)
		+
		\left(\begin{array}{c}
			0 \\ C^1_{\rm v}(t) \\ 0 \\ 0
		\end{array}\right)
	}
	\end{equation}	
	with $C^1_{\rm v}(t)=\vec(C^1(t))$ and 
	\begin{eqnarray*}
	C^j_{\rm m}(t) &=&
			\left(\begin{array}{ccc}
				\vec(C^j_1(t)) & \cdots & \vec(C^j_{2d}(t))
			\end{array}\right)^T,\qquad j=2,3,
	\end{eqnarray*}
			and			
	\begin{eqnarray*}
		K^t &=& M^t \otimes \Id_{2d} \otimes \Id_{2d} + \Id_{2d} \otimes M^t \otimes \Id_{2d} + \Id_{2d} \otimes \Id_{2d} \otimes M^t ,\\
		D^t &=& \Id_{2d} \otimes C^2_{\rm m}(t),\qquad 
		L^t = M^t \otimes \Id_{2d} + \Id_{2d} \otimes M^t.
	\end{eqnarray*}
\end{corollary}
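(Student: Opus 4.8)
The plan is to treat this corollary as a purely notational reformulation of Theorem~\ref{ode_thm_odes}: the scalar system for the components $\Lambda^t_{ijk}$, $\Gamma^t_{ij}$, $\Xi^t_i$ is already established there, so nothing dynamical remains to be proved. What must be checked is that stacking these component equations together with the Hamiltonian flow equation \eref{eq:jham} into a single vector-valued ordinary differential equation reproduces exactly the block structure of \eref{vectorisation_odes_1}. The only tool needed is Lemma~\ref{prop_vectorisation}, which converts a single-index contraction $A_{i_k l} B_{i_1 \cdots l \cdots i_n}$ into the Kronecker product $\Id_{2d} \otimes \cdots \otimes A \otimes \cdots \otimes \Id_{2d}$ (with $A$ in the $k$-th slot) acting on $\vec(B)$, supplemented by the elementary identities for the Kronecker product. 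I would therefore vectorize each right-hand side term by term and then read off the matrix and inhomogeneity blocks.

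First I would dispose of the terms in which $M^t = J \cdot D^2h \circ \Phi^t$ acts on a single index. Each term $M^t_{il}\Lambda^t_{ljk}$, $M^t_{jl}\Lambda^t_{ilk}$, $M^t_{kl}\Lambda^t_{ijl}$ is literally of the form treated in Lemma~\ref{prop_vectorisation}, so it vectorizes to $M^t \otimes \Id_{2d} \otimes \Id_{2d}$, $\Id_{2d} \otimes M^t \otimes \Id_{2d}$, and $\Id_{2d} \otimes \Id_{2d} \otimes M^t$ respectively; their sum is precisely $K^t$. The same argument applied to the two $M^t$-terms of the $\Gamma$-equation gives $L^t = M^t \otimes \Id_{2d} + \Id_{2d} \otimes M^t$, and the single $M^t$-term of the $\Xi$-equation gives the lower-right block $M^t$. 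The inhomogeneity $C^1_{ijk}(t)$ simply vectorizes to $C^1_{\rm v}(t) = \vec(C^1(t))$. Finally, the top block row reproduces the flow itself, since $\Id_{2d}$ acting on the input $J \cdot Dh \circ \Phi^t$ returns $\frac{\rmd}{\rmd t}\Phi^t$ by \eref{eq:jham}.

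Next I would handle the coupling terms, which are contractions reducing a higher tensor to a lower one. For the $\Xi$-equation both coupling terms contract all indices of the incoming tensor and leave only the single free index $i$, namely $(C^3_i(t))_{jkl}\Lambda^t_{lkj}$ and $3(C^2_i(t))_{jk}\Gamma^t_{kj}$. Each is thus a linear functional of $\vec(\Lambda^t)$ (respectively $\vec(\Gamma^t)$) whose coefficients, read off index by index, are exactly the entries of $\vec(C^3_i(t))$ (respectively $\vec(C^2_i(t))$); stacking these row vectors over $i = 1,\ldots,2d$ yields the matrices $C^3_{\rm m}(t)$ and $C^2_{\rm m}(t)$, and the terms become $C^3_{\rm m}(t)\vec(\Lambda^t)$ and $3C^2_{\rm m}(t)\vec(\Gamma^t)$. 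The coupling term $(C^2_i(t))_{kl}\Lambda^t_{lkj}$ in the $\Gamma$-equation is the mixed case: two indices of $\Lambda^t$ are contracted while the index $j$ survives and is passed through to $\Gamma^t$. This is handled by combining the stacking matrix $C^2_{\rm m}(t)$ with an identity factor for the surviving index, giving $D^t = \Id_{2d} \otimes C^2_{\rm m}(t)$. Assembling all blocks produces the block-lower-triangular matrix of \eref{vectorisation_odes_1}.

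The step I expect to require the most care is the verification of $D^t$, and more generally the consistency of index conventions in the coupling blocks. Because the contractions in the $\Gamma$- and $\Xi$-equations are written with a reversed index order (for instance $\Lambda^t_{lkj}$ rather than $\Lambda^t_{jkl}$), one must use the full symmetry of the $3$-tensor $\Lambda^t$ — which it inherits, via Lemma~\ref{lem_symm_JD3h}, from the symmetry of $J\widetilde{D^3h}$ — together with the symmetry of $C^2_i(t)$ and $C^3_i(t)$ in their contracted indices (a consequence of the symmetry of $D^3h$ and $D^4h$) in order to reorder the summation so that the surviving index lands in the slot selected by the $\Id_{2d}$ factor of $\Id_{2d} \otimes C^2_{\rm m}(t)$, and so that the fully contracted terms agree with the dot products against $\vec(C^j_i(t))$. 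Once the slow and fast index placement of the vectorization operator is matched against the placement of the Kronecker identity factor, the remaining verifications are immediate applications of Lemma~\ref{prop_vectorisation} and the mixed-product rule for Kronecker products.
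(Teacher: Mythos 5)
Your overall strategy coincides with the paper's proof: apply Lemma~\ref{prop_vectorisation} term by term and assemble the blocks (the paper only writes out the $\Lambda$-row, obtaining $K^t$ and $C^1_{\rm v}(t)$, and declares the $\Gamma$- and $\Xi$-rows analogous). Your treatment of $K^t$, $L^t$, the lower-right $M^t$, $C^1_{\rm v}(t)$, and of the fully contracted blocks $C^3_{\rm m}(t)$ and $3C^2_{\rm m}(t)$ is correct, and you rightly observe that the latter require the symmetry of $C^2_i(t)$ and $C^3_i(t)$ in their contracted indices --- for the $\Gamma$-contraction in the $\Xi$-row this is indispensable, since $\Gamma^t$ is not symmetric. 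One caveat: in the corollary's setting $\Lambda^t$ is characterized as an ODE solution, not by the integral formula \eref{eq:deften}, so its full symmetry is not immediate from Lemma~\ref{lem_symm_JD3h}; you would need the identification supplied by Theorem~\ref{ode_thm_odes} (or an argument that the ODE propagates symmetry). This is minor, because the symmetries of $C^2_i(t)$ and $C^3_i(t)$ alone suffice everywhere.

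The genuine gap sits exactly at the step you flagged as delicate, the block $D^t$, and carrying out your own plan carefully shows that the claimed reconciliation fails. With the paper's row-major vectorization and Kronecker conventions, the entry of $(\Id_{2d}\otimes C^2_{\rm m}(t))\vec(\Lambda^t)$ in the row indexed by $(i,j)$ equals $\sum_{k,l}(C^2_j(t))_{kl}\Lambda^t_{ikl}$: the identity factor, sitting in the slow Kronecker slot, passes through the \emph{first} index of $\Lambda^t$, while the row of $C^2_{\rm m}(t)$ is selected by the \emph{second} index of $\Gamma^t$. The ODE demands $\sum_{k,l}(C^2_i(t))_{kl}\Lambda^t_{lkj}$, i.e.\ the roles of $i$ and $j$ interchanged. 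These are the $(i,j)$ and $(j,i)$ entries of the matrix $T_{ij}=\sum_{k,l}(C^2_i(t))_{kl}\Lambda^t_{jkl}$, and no symmetry of $\Lambda^t$ or of $C^2_i(t)$ in its contracted indices makes $T$ symmetric; concretely, for the Schr\"odinger Hamiltonian one has $C^2_i(t)=0$ for $i\le d$, so the true coupling vanishes in every row with $i\le d$, whereas $\Id_{2d}\otimes C^2_{\rm m}(t)$ inserts generically nonvanishing coupling into those rows. Under the stated conventions the correct block is $D^t=C^2_{\rm m}(t)\otimes\Id_{2d}$ (identity in the fast slot, matching the surviving \emph{last} index of $\Lambda^t$), which follows from the symmetry of $C^2_i(t)$ alone; this ordering is also what the paper itself uses in Theorem~\ref{theo:ham_schro}, whose blocks $K_8$ and $K_{10}$ are built from $(D^3V)_{\rm m}\otimes\Id_d$, not $\Id_d\otimes(D^3V)_{\rm m}$. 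So your argument ``verifies'' a misprinted block by appealing to symmetries that do not apply; a correct proof must either derive $C^2_{\rm m}(t)\otimes\Id_{2d}$ or explicitly record the needed transposition of the Kronecker factors in the statement.
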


\begin{proof}
Applying Lemma~\ref{prop_vectorisation} to the $\Lambda_{ijk}^t$ part of the ordinary differential equation, we obtain
	\begin{eqnarray*}
		\fl\frac{\rmd}{\rmd t} \overrightarrow{\Lambda}^t &=& \frac{\rmd}{\rmd t} \vec \left(\Lambda^t_{ijk} \right) 
			= \vec \left( M^t_{il} \Lambda^t_{ljk} + M^t_{jl} \Lambda^t _{ilk} + M^t_{kl} \Lambda^t_{ijl} +  C^1_{ijk}(t) \right) 
			\\
			\fl&=&	\left(	M^t \otimes\Id_{2d} \otimes \Id_{2d} + \Id_{2d} \otimes M^t \otimes \Id_{2d} + \Id_{2d} \otimes \Id_{2d} \otimes M^t \right)
			 		\vec(\Lambda^t) + C^1_{\rm v}(t)
			\\
			\fl&=& K^t \vec(\Lambda^t) + C^1_{\rm v}(t).
	\end{eqnarray*}
	The proofs for $\vec(\Gamma^t)$ and $\Xi^t$ are analogous.
\end{proof}

\subsection{Vectorization: Schr\"odinger Hamiltonians}
We now analyse the vectorized system of ordinary differential equations \eref{vectorisation_odes_1} for the special case, that the Hamilton function is given by 
\begin{equation*}h(q,p)=\case{1}{2}|p|^2+V(q).\end{equation*}  
Then, 
\begin{equation}\label{eq:MC}
\fl D^2h = \left( \begin{array}{cc} D^2V & 0 \\ 0 & \Id_d \end{array} \right),\quad
M^t = J\cdot D^2h\circ\Phi^t = \left( \begin{array}{cc} 0 & \Id_d \\ -D^2V \circ \Phi^t_q & 0 \end{array} \right).
\end{equation}
Moreover, 
\begin{eqnarray*}
\fl D^3h = \cases{ (D^3V)_{ijk}, & $i,j,k \leq d$ \\ 0, & else } , \qquad
		D^4h = \cases{ (D^4V)_{ijkl}, & $i,j,k,l \leq d$ \\ 0, & else }.
\end{eqnarray*}	
To exploit this zero pattern, we reorder our system of differential equations. We set
\begin{eqnarray*}
\fl\left( \Lambda_{1}^t \right)_{ijk} 	= \Lambda_{ijk}^t,\qquad  
&\left( \Lambda_{2,1}^t \right)_{ijk} 	= \Lambda_{(i+d)jk}^t,\qquad
&\left( \Lambda_{2,2}^t \right)_{ijk} 	= \Lambda_{i(j+d)k}^t,\\
\fl\left( \Lambda_{2,3}^t \right)_{ijk} = \Lambda_{ij(k+d)}^t, \qquad
&\left( \Lambda_{3,1}^t \right)_{ijk} 	= \Lambda_{i(j+d)(k+d)}^t,\qquad
&\left( \Lambda_{3,2}^t \right)_{ijk} 	= \Lambda_{(i+d)j(k+d)}^t,\\
\fl\left( \Lambda_{3,3}^t \right)_{ijk} 	= \Lambda_{(i+d)(j+d)k}^t, 
&\left( \Lambda_{4}^t \right)_{ijk}    = \Lambda_{(i+d)(j+d)(k+d)}^t,\\
\fl\left( \Gamma_{1}^t \right)_{ij} 			= \Gamma_{ij}^t,\qquad
&\left( \Gamma_{2,1}^t \right)_{ij} 		= \Gamma_{(i+d)j}^t,\qquad
&\left( \Gamma_{2,2}^t \right)_{ij} 		= \Gamma_{i(j+d)}^t,\\
\fl\left( \Gamma_{3}^t \right)_{ij} 	= \Gamma_{(i+d)(j+d)}^t,\qquad
&\left( \Xi_{1}^t \right)_{i} 					= \Xi_{i}^t,\qquad
&\left( \Xi_{2}^t \right)_{i} 					= \Xi_{i+d}^t,
\end{eqnarray*}
for $i,j,k=1,\ldots,d$. These terms are recollected according to 
\begin{equation*}
\overrightarrow{\Lambda}_{1}^t= \vec(\Lambda_{1}^t),\quad
\overrightarrow{\Lambda}_{4}^t= \vec(\Lambda_{4}^t),\quad
\overrightarrow{\Gamma}_{1}^t= \vec(\Gamma_{1}^t),\quad
\overrightarrow{\Gamma}_{3}^t= \vec(\Gamma_{3}^t),
\end{equation*}	
and
\begin{equation*}
		 \overrightarrow{\Lambda}_{2}^t= \left(\begin{array}{c} \vec(\Lambda_{2,1}^t) \\ \vec(\Lambda_{2,2}^t) \\ \vec(\Lambda_{2,3}^t) \end{array}\right),\quad
		 \overrightarrow{\Lambda}_{3}^t= \left(\begin{array}{c} \vec(\Lambda_{3,1}^t) \\ \vec(\Lambda_{3,2}^t) \\ \vec(\Lambda_{3,3}^t) \end{array}\right),\quad
		 \overrightarrow{\Gamma}_{2}^t= \left(\begin{array}{c} \vec(\Gamma_{2,1}^t) \\ \vec(\Gamma_{2,2}^t) \end{array}\right).
\end{equation*}
This reordering enhances the zero pattern of the right hand side of the ordinary differential equation.
		
\begin{theorem}[Schr\"odinger Hamiltonians]\label{theo:ham_schro}
Let $V:\R^d\to\R$ be a smooth function of subquadratic growth and $h:\R^{2d}\to\R$, $h(q,p)=\frac12|p|^2+V(q)$. Let $(q_0,p_0)\in\R^{2d}$ and $(\Phi^t,\vec(\Lambda^t),\vec(\Gamma^t),\Xi^t)$ be the solution of the ordinary differential equation \eref{vectorisation_odes_1} given in Corollary~\ref{thm_vec_odes} with initial values
\begin{equation*}
(\Phi^0,\vec(\Lambda^0),\vec(\Gamma^0),\Xi^0) = ((q_0,p_0),0,0,0)\in\R^{2d+8d^3+4d^2+2d}.
\end{equation*}
We set
\begin{equation*}
\fl \Psi^t_1 = \Phi^t_q\in\R^d,\qquad
\Psi^t_2 = 
\left(\begin{array}{c} \Phi^t_p \\ \overrightarrow{\Lambda}_{2}^t \\ \overrightarrow{\Lambda}_{4}^t \\ 
\overrightarrow{\Gamma}_{2}^t \\ \Xi^t_2  	\end{array}\right)\in\mathbb{R}^{4d^3+2d^2+2d},\qquad
\Psi^t_3 = \left(\begin{array}{c}  \overrightarrow{\Lambda}_{1}^t \\ \overrightarrow{\Lambda}_{3}^t \\ \overrightarrow{\Gamma}_{1}^t \\ \overrightarrow{\Gamma}_{3}^t \\ \Xi^t_1  	\end{array}\right)\in\mathbb{R}^{4d^3+2d^2+d}. 
\end{equation*}
Then, 
	\begin{equation}\label{ode:hamschro}
		\fl\qquad\frac{\rmd}{\rmd t} \left(\begin{array}{c} \Psi^t_1 \\ \Psi^t_2 \\ \Psi^t_3 \end{array}\right) =
			\left(\begin{array}{ccc} 
				0 & A_1 & 0 \\
				0 & 0 & A_2\circ\Psi_1^t \\
				0 & A_3\circ\Psi_1^t & 0
			\end{array} \right)
			\cdot \left(\begin{array}{c} \Psi^t_1 \\ \Psi^t_2 \\ \Psi^t_3 \end{array}\right)
			+ \left(\begin{array}{c} 0 \\ b_2\circ\Psi_1^t \\ 0 \end{array}\right)
	\end{equation}
with $\Psi^0_1=q_0$, $\Psi^0_2 = (p_0,0)$ and $\Psi^0_3= 0$, 
where $A_1 = \left(\begin{array}{ccccc} \Id_d & 0 & 0 & 0 & 0 \end{array}\right)$ and
\begin{equation*}
A_2 = \left(\begin{array}{ccccc} 
				0 & 0 & 0 & 0 & 0 \\
				K_2 & K_3 & 0 & 0 & 0 \\
				0 & K_6 & 0 & 0 & 0 \\
				K_8 & 0 & K_9 & K_{15} & 0 \\
				K_{12} & 0 & K_{13} & 0 &  K_{14}				
		\end{array}\right), \qquad
b_2 = \left(\begin{array}{c}		-DV \\ 0 \\ -\vec(\widetilde{D^3V}) \\ 0 \\ 0 \end{array}\right), 	 
\end{equation*}
and 
\begin{equation*}
A_3 = \left(\begin{array}{ccccc} 
				0 & K_1 & 0 & 0 & 0 \\
				0 & K_4 & K_5 & 0 & 0 \\
				0 & 0 & 0 & K_7 & 0 \\
				0 & K_{10} & 0 & K_{11} & 0 \\
				0 & 0 & 0 & 0 & \Id_d				
		\end{array}\right). 
\end{equation*}
Denoting
\begin{eqnarray*}
		(D^3V)_{\rm m} = 
				\left(\begin{array}{cccccc} 
					 D^3V_{(1,1,1)} & \cdots & D^3V_{(1,1,d)} & D^3V_{(1,2,1)} & \cdots & D^3V_{(1,d,d)} \\
					 \vdots & \vdots & \vdots & \vdots & \vdots & \vdots 	\\					 
					 D^3V_{(d,1,1)} & \cdots & D^3V_{(d,1,d)} & D^3V_{(d,2,1)} & \cdots & D^3V_{(d,d,d)}
				\end{array}\right) 
\end{eqnarray*}
and 
\begin{eqnarray*}
\fl (D^4V)_{\rm m} =\\\vspace*{4em}
\fl\left(\begin{array}{ccccccccc} 
					 D^4V_{(1,1,1,1)} & \cdots & D^4V_{(1,1,1,d)} & D^4V_{(1,1,2,1)} & \cdots & D^4V_{(1,1,d,d)} & D^4V_{(1,2,1,1)} & \cdots & D^4V_{(1,d,d,d)}\\
					 \vdots & \vdots & \vdots & \vdots & \vdots & \vdots & \vdots & \vdots & \vdots 	\\					 
					 D^4V_{(d,1,1,1)} & \cdots & D^4V_{(d,1,1,d)} & D^4V_{(d,1,2,1)} & \cdots & D^4V_{(d,1,d,d)} & D^4V_{(d,2,1,1)} & \cdots & D^4V_{(d,d,d,d)}
				\end{array}\right), 
	\end{eqnarray*}

\bigskip\noindent	
the matrices $K_1,\ldots,K_{15}$ are given by 
	\begin{eqnarray*}		
	\fl	 K_1 = \left(\begin{array}{ccc} \Id_{d^3} & \Id_{d^3} & \Id_{d^3} \end{array}\right),
		 K_2 = \left(\begin{array}{c} -D^2V \otimes \Id_d \otimes \Id_d \\ \Id_d \otimes -D^2V \otimes \Id_d 
								\\ \Id_d \otimes\ \Id_d \otimes -D^2V 
							\end{array}\right),\\
							\fl K_3 = \left(\begin{array}{ccc} 0 & \Id_{d^3} & \Id_{d^3} \\ \Id_{d^3} & 0 & \Id_{d^3} \\ \Id_{d^3} & \Id_{d^3} & 0 \end{array}\right) ,\\
\fl							K_4 = \left(\begin{array}{ccc} 0 & \Id_d \otimes \Id_d \otimes -D^2V & \Id_d \otimes -D^2V \otimes \Id_d \\
													\Id_d \otimes \Id_d \otimes -D^2V & 0 & -D^2V \otimes \Id_d \otimes \Id_d \\
													\Id_d \otimes -D^2V \otimes \Id_d  & -D^2V \otimes \Id_d \otimes \Id_d & 0
							\end{array}\right),\\
\fl							K_5 = \left(\begin{array}{c} \Id_{d^3} \\ \Id_{d^3} \\ \Id_{d^3} \end{array}\right),
		\\ \fl K_6 = \left(\begin{array}{ccc} -D^2V \otimes \Id_d \otimes \Id_d & \Id_d \otimes -D^2V \otimes \Id_d 
								& \Id_d \otimes \Id_d \otimes -D^2V 
							\end{array}\right),	
		\\ \fl K_7 = \left(\begin{array}{cc} \Id_{d^2} & \Id_{d^2} \end{array}\right),
		 K_8 = \left(\begin{array}{c} 0 \\ -(D^3V)_{\rm m} \otimes \Id_d \end{array}\right),
		 K_9 = \left(\begin{array}{c} \Id_d \otimes -D^2V \\ -D^2V \otimes \Id_d \end{array}\right),\\
		\fl  K_{10} = \left(\begin{array}{ccc} 0 & 0 & -(D^3V)_{\rm m} \otimes \Id_d \end{array}\right),
		K_{11} = \left(\begin{array}{cc} -D^2V \otimes \Id_d &  \Id_d \otimes -D^2V\end{array}\right)	,\\	
		 \fl K_{12} = -(D^4V)_{\rm m},
		 K_{13} = -3(D^3V)_{\rm m},
		 K_{14} = -(D^3V)_{\rm m},
		 K_{15} = \left(\begin{array}{c} \Id_{d^2}  \\ \Id_{d^2}\end{array}\right).
	\end{eqnarray*}
\end{theorem}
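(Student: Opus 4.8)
The plan is to specialise the general vectorised system of Corollary~\ref{thm_vec_odes} to the Schr\"odinger Hamiltonian and to exploit the zero patterns of the derivative tensors recorded in \eref{eq:MC}. First I would substitute the block forms of $M^t$, $C^1(t)$, $C^2(t)$, and $C^3(t)$. Using that $D^3h=D^3V$ and $D^4h=D^4V$ are supported only on the block of indices $\le d$, together with the definition \eref{def_JD3tilde} of $J\widetilde{D^3h}$ and the convention $A\cdot B=A_{i_1 s}B_{s i_2\cdots i_k}$, one checks that $C^1(t)$ is supported on the block of all-$p$-type indices (those $>d$) with value $-\widetilde{D^3V}\circ\Phi^t_q$, while $C^2(t)$ and $C^3(t)$ are supported on the blocks whose first index is $>d$ and whose remaining indices are $\le d$, with values built from $D^3V\circ\Phi^t_q$ and $D^4V\circ\Phi^t_q$. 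Reading $M^t$ as a linear map, it carries a $q$-type slot (index $\le d$) to a $p$-type slot with factor $-D^2V\circ\Phi^t_q$ and a $p$-type slot to a $q$-type slot with factor $\Id_d$; in either case the number of $p$-type slots of the contracted sub-tensor changes by exactly one.

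The conceptual core is a parity argument. I would split each of $\Lambda^t$, $\Gamma^t$, and $\Xi^t$ into the sub-tensors $\Lambda^t_1,\dots,\Lambda^t_4$, $\Gamma^t_1,\Gamma^t_{2,\cdot},\Gamma^t_3$, $\Xi^t_1,\Xi^t_2$ according to how many of their slots carry an index $>d$, exactly as fixed before the statement. Since every occurrence of $M^t_{il}$ flips the slot $i$ between $q$-type and $p$-type, each $M^t$-contraction changes the total number of $p$-type slots by one, whereas the inhomogeneity $C^1$ and the $C^2$-, $C^3$-couplings have definite parity. Hence the right-hand side maps the even-parity group into the odd-parity group and vice versa. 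Collecting the odd-parity tensors (together with $\Phi^t_p$) into $\Psi^t_2$ and the even-parity tensors (together with $\Xi^t_1$) into $\Psi^t_3$, while singling out $\Phi^t_q=\Psi^t_1$ because it is the base point at which all derivatives of $V$ are evaluated, produces exactly the block off-diagonal structure of \eref{ode:hamschro}: $\frac{\rmd}{\rmd t}\Psi^t_1$ depends only on $\Psi^t_2$, the derivative $\frac{\rmd}{\rmd t}\Psi^t_2$ only on $\Psi^t_3$ plus the inhomogeneity $b_2\circ\Psi^t_1$, and $\frac{\rmd}{\rmd t}\Psi^t_3$ only on $\Psi^t_2$.

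It then remains to read off the individual blocks. Here I would apply the vectorisation Lemma~\ref{prop_vectorisation} slot by slot. The $M^t$-contractions produce Kronecker products of $\Id_d$ and $-D^2V\circ\Phi^t_q$ in the appropriate factor: raising a $q$-slot supplies the blocks carrying $-D^2V$ (such as $K_2$, $K_4$, $K_6$, $K_9$, $K_{11}$), while lowering a $p$-slot supplies the pure identity blocks that reshuffle sub-tensors within one parity group (such as $K_1$, $K_3$, $K_5$, $K_7$, $K_{15}$); in particular the off-diagonal $\Id$ pattern of $K_3$ encodes which $\Lambda^t_{3,\cdot}$ feeds which $\Lambda^t_{2,\cdot}$. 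The $C^2$- and $C^3$-couplings produce the matricised derivative tensors $(D^3V)_{\rm m}$ and $(D^4V)_{\rm m}$, supplying $K_8$, $K_{10}$, $K_{12}$, and $K_{13}$. The $b_2$-entries $-DV$ and $-\vec(\widetilde{D^3V})$ come from the Hamilton equation $\frac{\rmd}{\rmd t}\Phi^t_p=-DV\circ\Phi^t_q$ and from $C^1$, respectively. The initial values $\Psi^0_1=q_0$, $\Psi^0_2=(p_0,0)$, $\Psi^0_3=0$ are inherited from the vanishing initial data of $\Lambda$, $\Gamma$, $\Xi$ in \eref{odes_first_approach} and from $\Phi^0=(q_0,p_0)$.

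I expect the main obstacle to be organisational rather than conceptual: correctly tracking how $M^t$ permutes and rescales the several sub-tensors inside each parity group (this fixes the exact placement of the $\Id_d$ versus $-D^2V$ entries and the internal permutation patterns of $K_3$, $K_4$, $K_{15}$), and aligning the slot-wise Kronecker ordering delivered by Lemma~\ref{prop_vectorisation} with the chosen vectorisation order of $\Lambda^t_{2,\cdot}$, $\Lambda^t_{3,\cdot}$, and $\Gamma^t_{2,\cdot}$. The parity decomposition that yields the block off-diagonal shape is short; the length of the argument lies entirely in verifying the matrices $K_1,\dots,K_{15}$ one at a time.
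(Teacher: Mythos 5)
Your proposal is correct and takes essentially the same route as the paper: specialize $M^t$, $C^1(t)$, $C^2(t)$, $C^3(t)$ to the Schr\"odinger form \eref{eq:MC}, track in the componentwise equations of Corollary~\ref{thm_vec_odes} whether each index lies in $\{1,\ldots,d\}$ or $\{d+1,\ldots,2d\}$, and reorder the resulting system into the stated blocks. Your parity bookkeeping is just an explicit organization of the step the paper compresses into ``Reordering the above differential equations, we obtain the claimed result,'' so the two arguments coincide in substance.
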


\begin{proof}
In the following, we denote 
\begin{equation*}
A_{i_1,\ldots,s,\ldots,i_k}B_{j_1,\ldots,s,\ldots,j_l} = \sum_{s=1}^d A_{i_1,\ldots,s,\ldots,i_k}B_{j_1,\ldots,s,\ldots,j_l}
\end{equation*}
for tensors $A\in\R^{d\times \cdots \times d}$ and $B\in\R^{2d\times \cdots\times 2d}$. By the specific form of the matrix $M^t$ given in \eref{eq:MC}, we obtain 
	\begin{eqnarray*}
		M^t_{i l_1} \Lambda^t_{l_1 l_2 l_3} = 
			\cases{ 
				\Lambda^t_{(i+d)l_2l_3}, & for $i \leq d$, \\
				(-D^2V\circ \Phi^t_q )_{(i-d)s} \cdot \Lambda^t_{s l_2 l_3}, & else,}\\
		M^t_{i l_2} \Lambda^t_{l_1 l_2 l_3} = 
			\cases{ 
				\Lambda^t_{l_1(i+d)l_3}, & for $i \leq d$, \\
				(-D^2V\circ \Phi^t_q )_{(i-d)s} \cdot \Lambda^t_{l_1 s l_3}, & else,}\\
		M^t_{i l_3} \Lambda^t_{l_1 l_2 l_3} = 
			\cases{ 
				\Lambda^t_{l_1l_2(i+d)}, & for $i \leq d$, \\
				(-D^2V\circ \Phi^t_q )_{(i-d)s} \cdot \Lambda^t_{l_1 l_2 s}, & else,}		
	\end{eqnarray*}
Therefore, 	
\begin{eqnarray*}
		\frac{\rmd}{\rmd t}\Lambda^t_{ijk} 
			&=& 1_{\{i \leq d\}} \Lambda^t_{(i+d) j k} 
			- 1_{\{i > d\}} (D^2V\circ \Phi^t_q)_{(i-d)s} \cdot \Lambda^t_{s j k}\\
			&&+ 1_{\{j \leq d\}} \Lambda^t_{i (j+d) k} 
			- 1_{\{j > d\}} (D^2V\circ \Phi^t_q)_{(j-d)s} \cdot \Lambda^t_{isk}\\
			&&+ 1_{\{k \leq d\}} \Lambda^t_{i j (k+d)} 	
			- 1_{\{k > d\}} (D^2V\circ \Phi^t_q)_{(k-d)s} \cdot \Lambda^t_{i j s}\\
			&&- 1_{\{i,j,k > d\}} \widetilde{D^3V}_{(i-d)(j-d)(k-d)} \circ \Phi^t_q,
\end{eqnarray*}	
since 		
\begin{eqnarray*}
C^1_{ijk}(t)  = \cases{ -\widetilde{D^3V}_{(i-d)(j-d)(k-d)} \circ \Phi^t_q, & $i,j,k>d$ \\ 0, & else }.
\end{eqnarray*}
In the same way, we obtain
\begin{eqnarray*}			
	\frac{\rmd}{\rmd t}\Gamma^t_{ij}
			&=& 1_{\{i \leq d\}} \Gamma^t_{(i+d) j} - 1_{\{i > d\}} (D^2V\circ \Phi^t_q)_{(i-d)s} \cdot \Gamma^t_{sj}\\
					&&+ 1_{\{j \leq d\}} \Gamma^t_{i (j+d)} - 1_{\{j > d\}} (D^2V\circ \Phi^t_q)_{(j-d)s} \cdot \Gamma^t_{is}\\
				  &&- 1_{\{i \leq d\}} (D^3V\circ \Phi^t_q)_{(i-d)s s'} \Lambda_{s' s j},
\end{eqnarray*}
since
\begin{equation*}
\left( C^2_i(t) \right)_{jk} = \cases{ -(D^3V)_{(i-d)jk} \circ \Phi^t_q, & $i>d, j,k \leq d$ \\  0, & else }.
\end{equation*}
Finally, we compute				
\begin{eqnarray*}
\fl\frac{\rmd}{\rmd t}\Xi^t_{i}
			= 1_{\{i \leq d\}} \Xi^t_{(i+d)} - 1_{\{i > d\}} (D^2V\circ \Phi^t_q)_{(i-d)s} \cdot \Xi^t_{s}
				- 1_{\{i > d\}} (D^4V\circ \Phi^t_q)_{i s s' s''} \Lambda^t_{s'' s' s}
				\\- 3\cdot 1_{\{i > d\}} (D^3V\circ \Phi^t_q)_{i s s'} \Gamma^t_{s' s},
	\end{eqnarray*}
using that
\begin{eqnarray*}
		\left( C^3_i(t) \right)_{jkl} = \cases{ -(D^4V)_{(i-d)jkl} \circ \Phi^t_q, & $i>d, j,k,l \leq d$ \\  0, & else } .  
\end{eqnarray*}
Reordering the above differential equations, we obtain the claimed result.
\end{proof}

\section{Discretization}\label{discretization}

We adopt the following general scheme which has been previously developed for discretizing Egorov's theorem~\cite{LR10}: The classical Hamiltonian flow $\Phi^t$ is discretized by a symplectic order~$p$ method $z_{n+1}=\Psi^\tau(z_n)$ with sufficiently small step size $\tau>0$. The initial Wigner function is split into its positive and negative part
$W_{\psi_0} = W_{\psi_0}^+ - W_{\psi_0}^-$  
and is sampled by sufficiently many phase space points $z_1^\pm,\ldots,z_N^\pm\in\R^{2d}$.Then, expectation values for various observables $a:\R^{2d}\to\R$ with respect to the solution $\psi_t$ of the Schr\"odinger equation
\begin{equation*}
\rmi\eps\partial\psi_t = H\psi_t
\end{equation*}
are approximated according to 
\begin{equation}\label{algorithm}
\eqalign{
\langle\psi_t,\op(a)\psi_t\rangle = \langle\psi_0,\rme^{\rmi Ht/\eps}\op(a)\rme^{-\rmi Ht/\eps}\psi_0\rangle\\
= \langle\psi_0,\op(a\circ\Phi^t)\psi_0\rangle + O(\eps^2)
= \int_{\R^{2d}} a(\Phi^t(z)) W_{\psi_0} \rmd z + O(\eps^2)\\
\approx 
\frac1N \sum_{j=1}^N a((\Psi^\tau\circ\cdots\circ\Psi^\tau)(z_j^+)) - 
 \frac1N \sum_{j=1}^N a((\Psi^\tau\circ\cdots\circ\Psi^\tau)(z_j^-)) \\
=: I^N(a\circ\Psi^\tau\circ\cdots\circ\Psi^\tau).}
\end{equation}
The computational work of this algorithm lies in the sampling of the initial Wigner function and the classical evolution of the sample points. Then, expectation values are computed by a final phase space summation.

\begin{remark}
If the initial wave function $\psi_0$ is a Gaussian wave packet, then the Wigner function $W_{\psi_0}$ is positive, see 
\cite[Theorem~1.102]{Fo89}, and the approximation \eref{algorithm} reads as
$$
\langle\psi_t,\op(a)\psi_t\rangle \approx \frac1N \sum_{j=1}^N a((\Psi^\tau\circ\cdots\circ\Psi^\tau)(z_j))
$$
with $z_1,\ldots,z_N\in\R^{2d}$ sampled according to $W_{\psi_0}$. If $\psi_0$ is a superposition of Gaussian wave packets, then the Wigner function $W_{\psi_0}$ is a sum of phase space Gaussians plus oscillatory cross terms, such that stratified sampling can be applied, see \cite[\S3]{LR10}.
\end{remark}

\subsection{Splitting the two integrals}
Here, we add the second order correction $\eps^2 a_2(t)$ to $a_0(t)=a\circ\Phi^t$ and discretize 
\begin{equation*}
\fl\left\langle \op(a_0(t)+\eps^2 a_2(t))\psi_0, \psi_0 \right\rangle =
\int_{\mathbb{R}^{2d}} a_0(t,z) W_{\psi_0}(z)\rmd z +\eps^2 \int_{\mathbb{R}^{2d}} a_2(t,z) W_{\psi_0}(z)\rmd z,
\end{equation*}
where we split the phase space integral into two parts. This splitting has an impact on the computing time, since the prefactor $\eps^2$ allows to discretize the second summand rather coarsely without diminishing the overall accuracy of the approximation. We approximate the two integrals via
\begin{equation*}
 		\int_{\mathbb{R}^{2d}} a_j(t,z) W_{\psi_0}(z)\rmd z \;\approx\; 
 			I^{N_j} \left(a_j(t) \right),\qquad j=0,2,
\end{equation*}
such that
$\left\langle \op(a_0(t)+\eps^2 a_2(t))\psi_0,\psi_0  \right\rangle 
			\approx I^{N_0} \left(a_0(t) \right) + \eps^2 I^{N_2}\left(a_2(t) \right)$.

\subsection{Computing the integrals}\label{sec_discr_phase_sampl}
The two integrals depend on the Wigner function of the initial wave function. Here, we consider a Gaussian wave packet centered at $(q_0,p_0)\in \mathbb{R}^{2d}$, 
\begin{equation}\label{intro_wave_pack}
	\psi_0(q)= (\pi\eps)^{-d/4} \exp\!\left(-(2\eps)^{-1}|q-q_0|^2+ \frac{\rmi}{\eps}\,p_0 \cdot (q-q_0)\right).
\end{equation}
In this case the Wigner function can be calculated analytically as a phase space Gaussian with mean $(q_0,p_0)$ and covariance matrix $\Id_{2d}$ (see, for example \cite[\S3]{LR10}), namely 
\begin{eqnarray*}
 W_{\psi_0}(z)= (\pi \epsilon)^{-d} \exp\!\left(-\frac{1}{\eps} |z-(q_0,p_0)|^2\right).
\end{eqnarray*}
Let $f:\mathbb{R}\times \mathbb{R}^{2d}\to\R$ be the function to be integrated. Due to the high dimensionality of the problem, we use quasi-Monte Carlo quadrature. That is, 
\begin{eqnarray*}
	\int_{\mathbb{R}^{2d}} f(t,z) W_{\psi_0}(z)\rmd z
	\approx
		I^{N}(f(t)) = \frac{1}{N}\sum_{j=1}^{N}f(t,z_j),
\end{eqnarray*}
with quadrature nodes $\{ z_j \}^N_{j=1}\subset \R^{2d}$ of low star discrepancy with respect to the multivariate normal distribution. 
Then, the Koksma-Hlawka inequality yields a constant $\gamma=\gamma(f(t))>0$ such that
\begin{eqnarray} \label{approx_error_phase_sampl}
	\left| \int_{\mathbb{R}^{2d}} f(t,z) W_{\psi_0}(z)\rmd z	- I^{N}(f(t)) \right| \leq \gamma\, (\log N)^{c_d} N^{-1},
\end{eqnarray} 
where $c_d \geq 2d$,see, for example, \cite[\S3.2]{LR10}.

\subsection{Splitting the ordinary differential equations}\label{sec_discrete_a2}
To compute $a_0(t)=a\circ\Phi^t$ we have to discretize the Hamiltonian equation
\begin{equation*}
\frac{\rmd}{\rmd t}\left( \begin{array}{c} \Phi^t_q \\ \Phi^t_p \end{array} \right) = 
			\left( \begin{array}{cc} 0 & \Id_d \\ 0 & 0 \end{array} \right) \left( \begin{array}{c} \Phi^t_q \\ \Phi^t_p \end{array} \right) + \left( \begin{array}{c} 0 \\ DV \circ \Phi^t_q \end{array} \right). 
\end{equation*}
Now let $\phi_1^t$ and $\phi_2^t$ be the flows of the following differential equations
\begin{eqnarray*} 
	\frac{\rmd}{\rmd t} y(t) = \left( \begin{array}{cc} 0 & \Id_d \\ 0 & 0 \end{array} \right) \cdot y(t), \qquad 
	 \frac{\rmd}{\rmd t} y(t) = \left( \begin{array}{c} 0 \\ DV(y_1(t)) \end{array} \right).		
\end{eqnarray*}
These flows can be computed exactly by
\begin{eqnarray*}
	 \phi_1^t(y) = y + t\left( \begin{array}{c} y_2 \\ 0\end{array} \right) ,
	 \qquad \phi_2^t = y + t\left( \begin{array}{c} 0\\ DV(y_1) \end{array} \right),
\end{eqnarray*}
and
\begin{equation*}
\phi^\tau= \phi^{\tau/2}_1 \circ \phi^{\tau}_2 \circ \phi^{\tau/2}_1
\end{equation*}
defines a symplectic second order splitting scheme for $\tau>0$, the so-called Strang splitting, 
see \cite[\S4.3]{HWL06}. 
By suitable compositions of this scheme, one can construct symplectic splitting schemes of arbitrary order, see e.g.  \cite[\S4]{Yo90}).

For computing the correction term $a_2(t)$, we write the ordinary differential equation~\eref{ode:hamschro} of Theorem~\ref{theo:ham_schro} as
\begin{equation}\label{ode:split}
\frac{\rmd}{\rmd t}\Psi^t = N_1\Psi^t + \left((N_2\circ\Psi_1^t) \Psi^t + (B_2\circ\Psi_1^t)\right) + (N_3\circ\Psi_1^t)\Psi^t
\end{equation}
with
\begin{eqnarray*}
N_1 = \left(\begin{array}{ccc} 
				0 & A_1 & 0 \\
				0 & 0 & 0 \\
				0 & 0 & 0
			\end{array}\right),\qquad
N_2	=	\left(\begin{array}{ccc} 
				0 & 0 & 0 \\
				0 & 0 & A_2 \\
				0 & 0 & 0
			\end{array}\right),\qquad
B_2 = \left(\begin{array}{c} 0 \\ b_2 \\ 0 \end{array}\right),
\\
N_3 = \left(\begin{array}{ccc} 
				0 & 0 & 0 \\
				0 & 0 & 0 \\
				0 & A_3 & 0
			\end{array}\right).
\end{eqnarray*}
The zero pattern of the matrices $N_1,N_2,N_3$ allows to compute explicit flow maps. Indeed, 
let $\psi^t_1$, $\psi^t_2$, $\psi^t_3$ be the flows of the differential equations 
\begin{eqnarray*}
	\frac{\rmd}{\rmd t} y(t) = N_1 \cdot y(t) , \qquad
	\frac{\rmd}{\rmd t} y(t) = N_2(y(t)) \cdot y(t) + B_2(y(t)), \\
	\frac{\rmd}{\rmd t} y(t) = N_3(y(t)) \cdot y(t).
\end{eqnarray*}
Then, 
\begin{eqnarray*}
	\fl \psi^t_1(y) = y +  
		t\left(\begin{array}{c}
		 A_1 \cdot y_2 \\0  \\ 0 
		\end{array}\right) , \qquad
		\psi^t_2(y) = y +  
		t\left(\begin{array}{c}
			0 \\ A_2(y_1) \cdot y_3 + b_2(y_1)\\ 0
		\end{array}\right) ,
	\\\psi^t_3(y) = y + 
		t\left(\begin{array}{c}
			0  \\ 0 \\ A_3(y_1) \cdot y_2
		\end{array}\right)  .
\end{eqnarray*}    
For $\tau>0$, we obtain a first order splitting scheme for equation~\eref{ode:split} by 
\begin{eqnarray*}
	\psi^\tau= \psi^{\tau}_2 \circ \psi^{\tau}_1 \circ \psi^{\tau}_3,
\end{eqnarray*}
see, \cite[\S2.5]{HWL06}. Its adjoint is $\psi^{\tau*}= \psi^{\tau}_3 \circ \psi^{\tau}_1 \circ \psi^{\tau}_2$, 
and we can create a second order symmetric splitting method by
\begin{eqnarray*}
	F_2^\tau = \psi^{\tau/2} \circ \psi^{\tau/2*} =  \psi^{\tau/2}_2 \circ \psi^{\tau/2}_1 \circ \psi^{\tau}_3 \circ \psi^{\tau/2}_1 \circ \psi^{\tau/2}_2.
\end{eqnarray*}
A corresponding fourth order splitting is obtained by 
\begin{equation*}
F_4^\tau = F_2^{\frac{\tau}{2-2^{1/3}}} \circ F_2^{\frac{-2^{1/3} \tau}{2-2^{1/3}}} \circ F_2^{\frac{\tau}{2-2^{1/3}}}, 
\end{equation*}
see \cite[\S4]{Yo90}, and we obtain $\Psi^\tau-F_4^\tau = O(\tau^5)$ as $\tau\to0$. By construction, the first $2d$ components of $\psi^\tau, F_2^\tau$, and $F_4^\tau$, respectively,  define symplectic maps on phase space. 
\subsection{The approximation scheme }\label{sec_discrete_param}
At this point we have built up all the ingredients for computing an approximation to expectation values, 
which is fourth order accurate with respect to the semiclassical parameter $\eps$. Let $a:\R^{2d}\to\R$ be a Schwartz function. According to the discussion in Section~\ref{sec_discrete_a2}, we use time splitting schemes of order eight and order four, for the approximation of $a_0(t)$ and $a_2(t)$, respectively. That is, 
\begin{equation*}
a_0(t) = \widetilde a_0^\tau(t) + O(\tau^8),\qquad a_2(t) = \widetilde a_2^\tau(t) + O(\tau^4),
\end{equation*}
as $\tau\to0$, with $\widetilde a_0^\tau(t) = a\circ(\phi^\tau\circ\cdots\circ\phi^\tau)$ and 
\begin{eqnarray*}
\widetilde a_2(\tau) = -\case14 && \left( \left(D^3a\circ(\widetilde\Phi^\tau\circ\cdots\circ\widetilde\Phi^\tau)\right)_{ijk}(\widetilde\Lambda^\tau_{kji}\circ\cdots\circ\widetilde\Lambda^\tau_{kji})\right. \\
&& + 3\left(D^2a\circ(\widetilde\Phi^\tau\circ\cdots\circ\widetilde\Phi^\tau)\right)_{ij}
(\widetilde\Gamma^\tau_{ji}\circ\cdots\circ\widetilde\Gamma^\tau_{ji})\\ 
&& + \left(Da\circ(\widetilde\Phi^\tau\circ\cdots\circ\widetilde\Phi^\tau)\right)_{i}
(\widetilde\Xi^\tau_{i}\circ\cdots\circ\widetilde\Xi^\tau_i)\Big),
\end{eqnarray*}
where $\phi^\tau$ is a symplectic eighth order splitting for the Hamiltonian flow $\Phi^\tau$, while $\widetilde\Phi^\tau$ and the tensors $\widetilde\Lambda^\tau, \widetilde\Gamma^\tau, \widetilde\Xi^\tau$ consist of the appropriate components of the fourth order splitting $F^\tau_4$.
Using the quasi-Monte Carlo estimate (\ref{approx_error_phase_sampl}), we obtain
\begin{eqnarray*}
\fl\left\langle \op(a_0(t)+\eps^2 a_2(t))\psi_0,\psi_0 \right\rangle
 = I^{N_0}\left(\widetilde a_0^{\tau_0}(t)\right) + O\!\left((\log N_0)^{c_d}N_0^{-1}\right) + O(\tau_0^8)\\
 \qquad\qquad\qquad +\, \eps^2\, I^{N_2}\left( \widetilde a_2^{\tau_2}(t)\right) 
	  + \eps^2\, O\!\left((\log N_2)^{c_d}N_2^{-1}\right) + \eps^2\, O(\tau_2^4)
\end{eqnarray*}
for $\tau_0,\tau_2\to 0$ and $N_0,N_2\to\infty$. Choosing the time steps and the number of sampling points such that
\begin{equation*}
\max\!\left(\tau_0^8,(\log N_0)^{c_d}N_0^{-1}\right) \le \eps^4,\qquad
\max\!\left(\tau_2^4,(\log N_2)^{c_d}N_2^{-1}\right) \le \eps^2,
\end{equation*}
we finally obtain the desired asymptotic approximation
\begin{equation*}
\left\langle \op(a)\psi_t,\psi_t \right\rangle = 
		I^{N_0}\left(\widetilde a_0^{\tau_0}(t)\right) + \eps^2 I^{N_2}\left( \widetilde a_2^{\tau_2}(t)\right)
		+ O(\eps^4).
\end{equation*}
We note, that the number of sampling points $N_2$ can be chosen much smaller than $N_0$ and that the step size $\tau_2$ can be chosen about the same size as $\tau_0$. Ignoring the logarithmic term in the quasi-Monte Carlo estimate, we deduce as a rule of thumb: 
\begin{equation*}
N_2 \approx \eps^2 N_0,\qquad \tau_2 \approx \tau_0.
\end{equation*}
Hence, although the system of differential equations for the approximation of the correction $a_2(t)$ is more intricate than the one for $a_0(t)$, for moderate dimensions $d$, the computation of the correction is less costly.
\section{Numerical Experiments}\label{numerical}

For our numerical experiments, we consider the time dependent Schr\"odinger equation 
\begin{equation*}
\rm i\eps\partial\psi_t = \left(-\frac{\eps^2}{2}\Delta + 2-\cos(q_1)-\cos(q_2)\right)\psi_t
\end{equation*} 
in two dimensions with torsional potential. The time interval is $[0,15]$, and the initial data $\psi_0$ is chosen as a single Gaussian wave packet (\ref{intro_wave_pack}) with center $(q_0,p_0)=(1,0.5,0,0)$. We validate the approach developed in Section~\ref{discretization} for different observables, namely the position and momentum operators given by
\begin{eqnarray*}
	(q,p)\mapsto q_j, \qquad (q,p)\mapsto p_j, \qquad j=1,2
\end{eqnarray*} 
as well as the potential, kinetic and total energy operators defined by
\begin{eqnarray*}
	(q,p)\mapsto V(q), \qquad (q,p)\mapsto \case{1}{2} |p|^2, \qquad (q,p)\mapsto h(q,p)
\end{eqnarray*}
Since the computation of the leading order Egorov term $a_0(t)=a\circ\Phi^t$ has already been elaborated in \cite{LR10}, we will mainly focus on the correction term $a_2(t)$. 
The main goal of our numerical experiments is to show that our suggested algorithm can reach order four accuracy with respect to $\eps$, does this in an efficient way, and thus is feasible in a moderately high dimensional setting. 

As a reference, we use grid-based solutions to the Schr\"odinger equation computed by a Strang splitting scheme with Fourier collocation, see \cite[Appendix]{LR10}, and derive the needed expectation values. The parameters we use for computing the reference solution are given in Table~\ref{tab_ref_data}. 
\begin{table}[htbp]
	\label{tab_ref_data}
	\caption{The discretization parameters for the grid-based reference solutions computed by a Strang splitting scheme with Fourier collocation.  }
	\begin{indented}
	\item[]\begin{tabular}{ @{} l  l  l  l  l  l  l }
		\br
		$\epsilon$ & \# time steps & domain & space grid \\ \mr
		$0.1$ & $1.2*10^5$  & $[-3,3]\times[-3,3]$ & $1024\times1024$ \\ 
		$0.05$ & $3.6*10^5$ & $[-3,3]\times[-3,3]$ & $1024\times1024$ \\ 
		$0.02$ & $3.6*10^5$ & $[-3,3]\times[-3,3]$ & $1024\times1024$ \\ 
		$0.01$ & $1.2*10^6$ & $[-3,3]\times[-3,3]$ & $1024\times1024$ \\ 
		\br
	\end{tabular}
	\end{indented}
\end{table}

\subsection{Time-evolution of observables}

Figure~\ref{fig_vals_all} presents the expectation values computed by the full approximation
\begin{equation}\label{eq_approx_all}
\langle\op(a)\psi_t,\psi_t\rangle \approx I^{N_0}(\widetilde a_0^{\tau_0}(t)) + 
\eps^2 I^{N_2}(\widetilde a_2^{\tau_2}(t))
\end{equation}
for the case $\eps=10^{-2}$. We note that the total energy is preserved, see also Remark~\ref{rem:inv}.
Figure~\ref{fig_errors_all} compares the expectation values computed from the reference solution with those from the Egorov approximation
\begin{equation}\label{eq_approx_a0}
\langle\op(a)\psi_t,\psi_t\rangle \approx I^{N_0}(\widetilde a_0^{\tau_0}(t))
\end{equation}
as well as the corrected approximation~(\ref{eq_approx_all}). We observe that the error of the corrected approach is of the order $\eps^4$, while the error of the uncorrected approximation (\ref{eq_approx_a0}) is only of the order $\eps^2$.  
\begin{figure}[htbp]
  \centering
  \subfigure[]{
    \includegraphics[width=6cm]{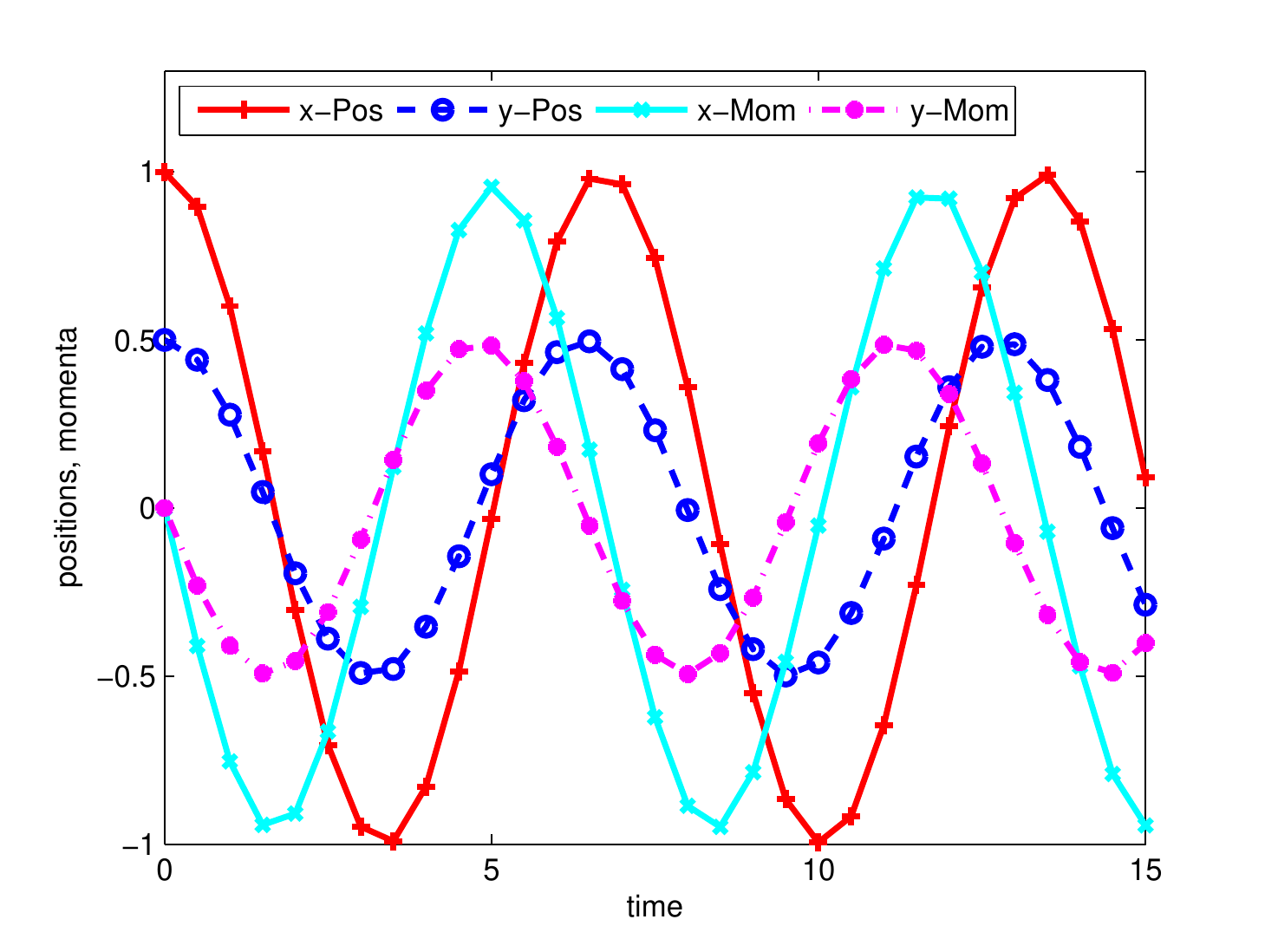} 
  }
  \subfigure[]{
    \includegraphics[width=6cm]{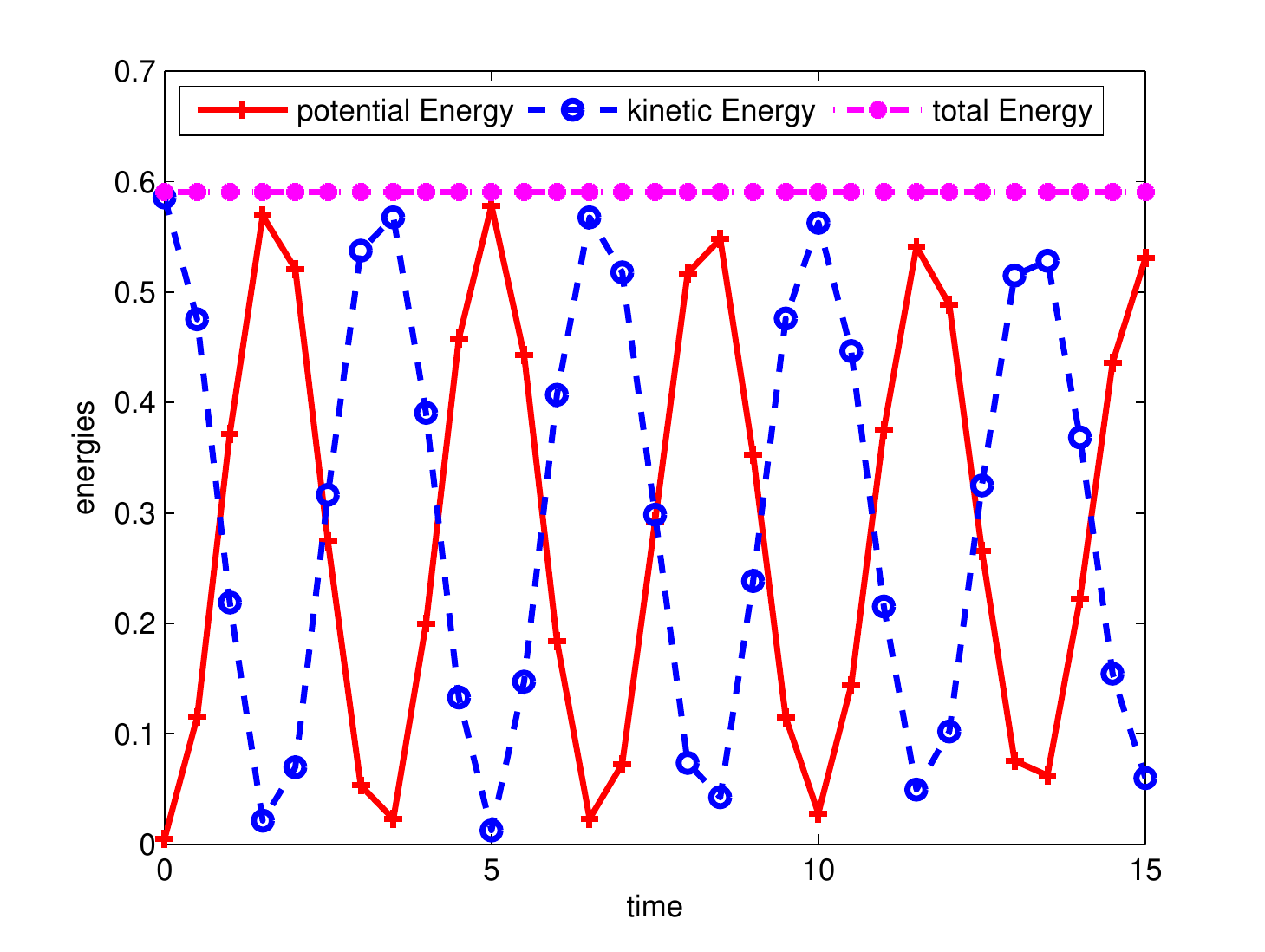}  
  }
  \caption{Figure 1(a) shows the expectation values of positions and momenta and Figure 1(b) the expectation values of kinetic, potential and total energy for the two dimensional torsion potential as a function of time computed by the approximation~(\ref{eq_approx_all}). The semiclassical parameter is chosen as $\eps=10^{-2}$.}
	\label{fig_vals_all}
\end{figure}
\begin{figure}[htbp]
  \centering
  \subfigure[]{
    \includegraphics[width=6cm]{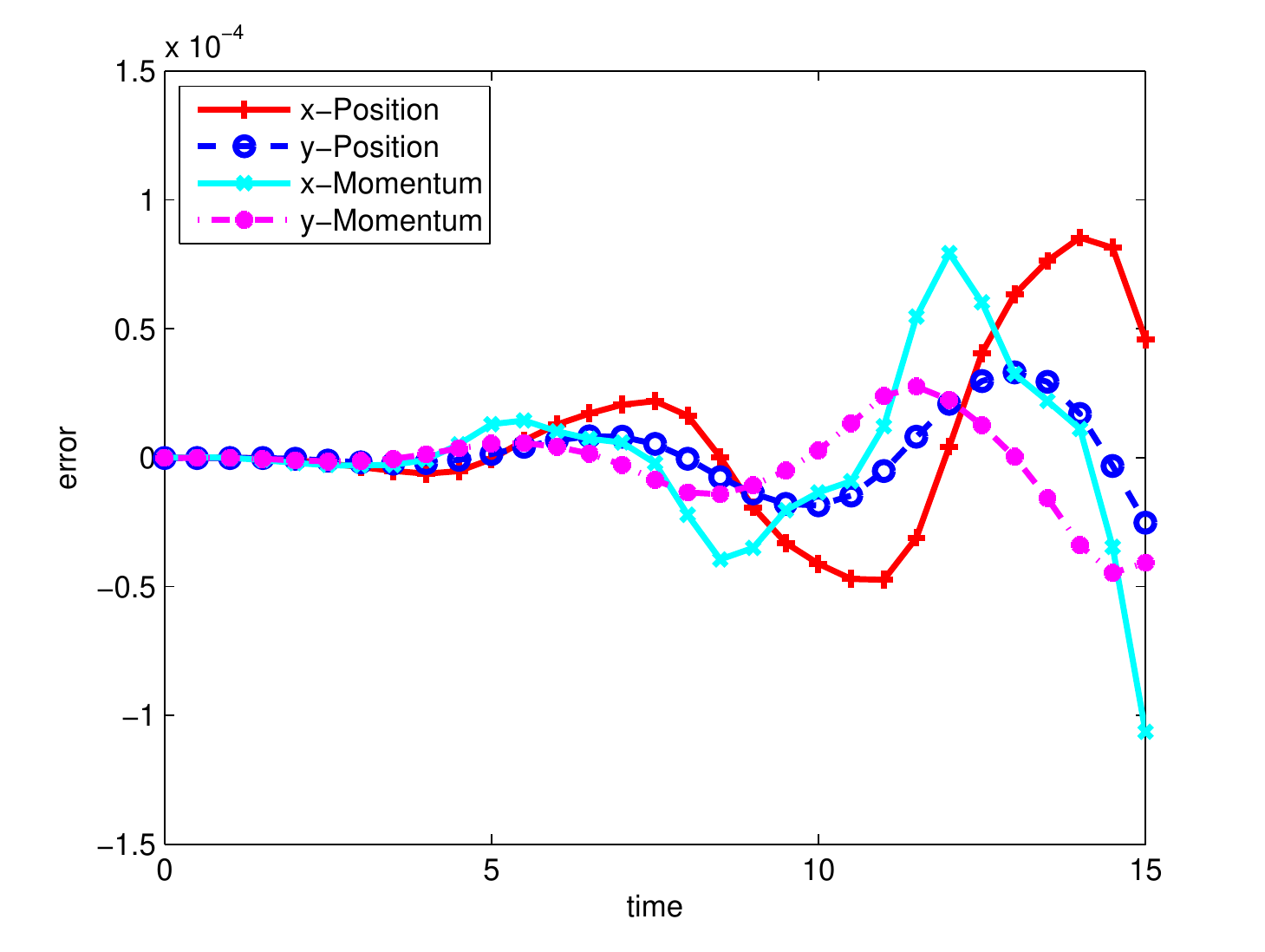} 
  }
  \subfigure[]{
    \includegraphics[width=6cm]{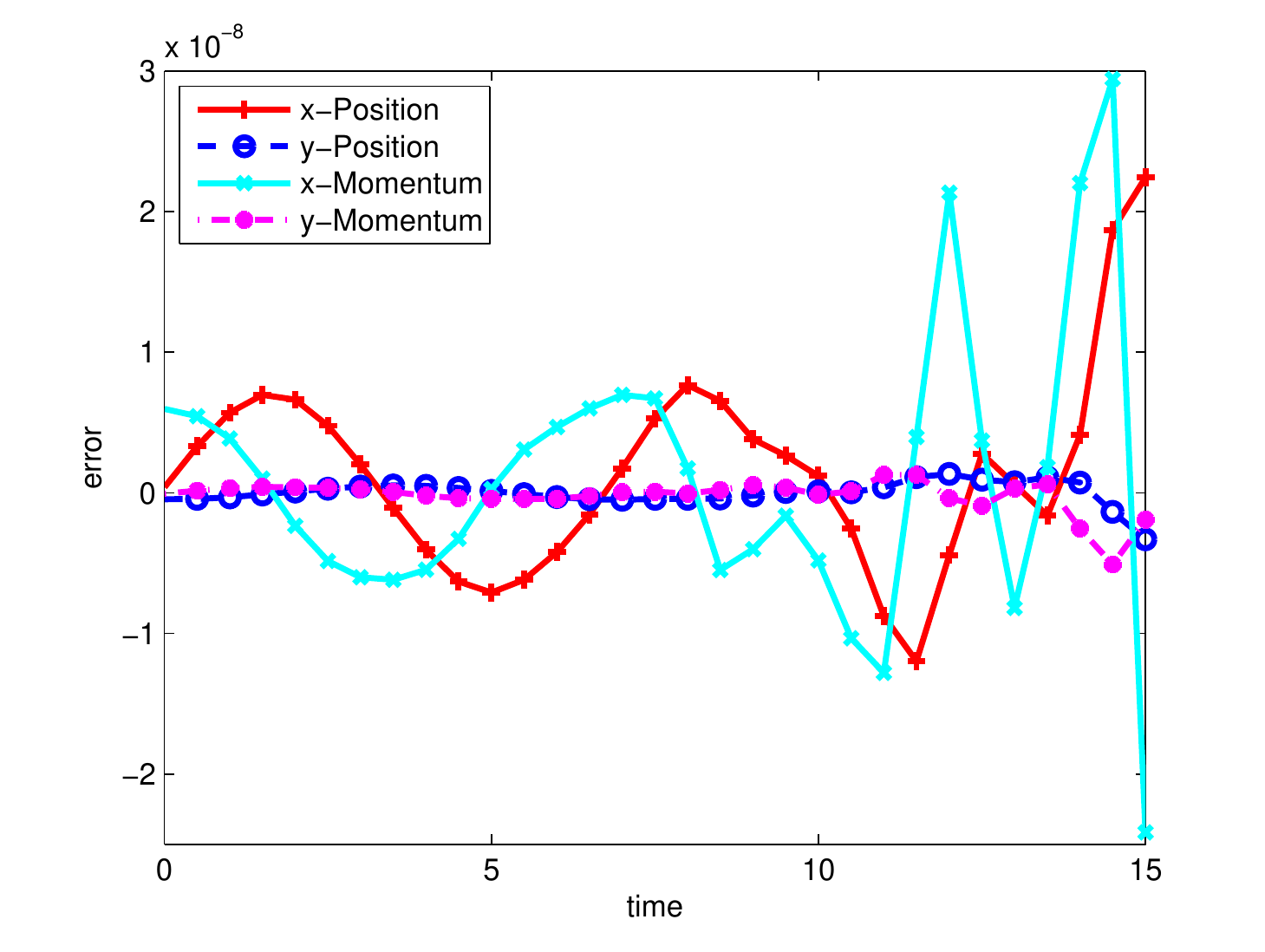}  
  }\\
  \subfigure[]{
    \includegraphics[width=6cm]{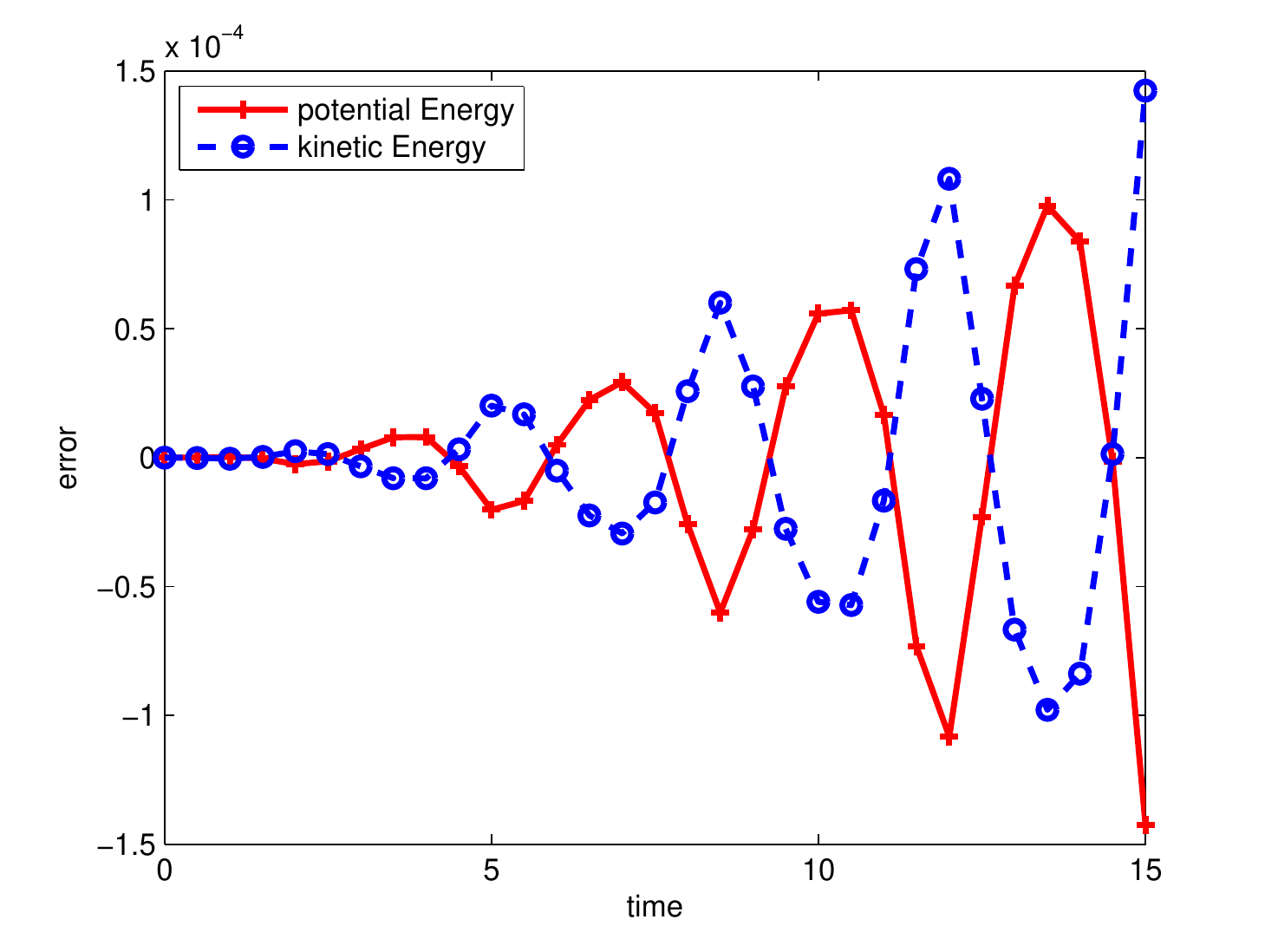} 
  }
  \subfigure[]{
    \includegraphics[width=6cm]{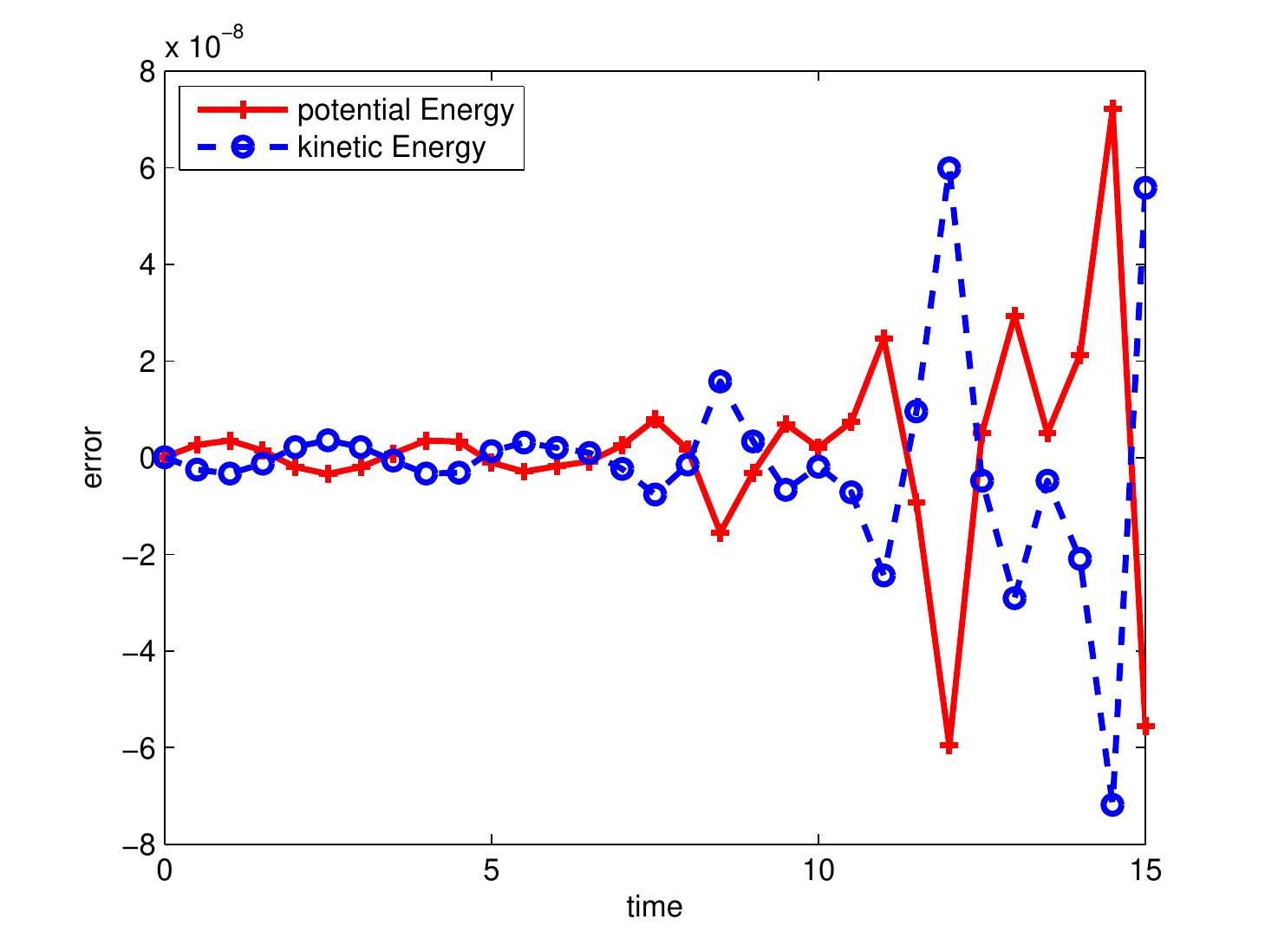}  
  }
  \caption{Figures 2(a) and 2(c) show the absolute error of the expectation values of positions and momenta as well of the kinetic and potential energy, respectively, using the simple Egorov approximation~(\ref{eq_approx_a0}).
	Figures 2(b) and 2(d) show the same for the corrected approximation~(\ref{eq_approx_all}).	
	The semiclassical parameter is chosen as $\eps=10^{-2}$. In the two pictures on the left, the error is of order $\eps^2$, whereas those on the right reach an order of $\eps^4$.  }
  \label{fig_errors_all}
\end{figure}

\subsection{Asymptotic accuracy}
To validate that the corrected algorithm is of the order of four in $\eps$, we compute the maximal and mean deviation from our reference solution for $\eps=0.1,0.05,0.02,0.01$, where the number of Halton points $N_0$ and $N_2$ and the size of the time steps $\tau_0$ and $\tau_2$ are given in Table~\ref{tab_diff_eps}. In Table~\ref{tab_diff_eps} the computing times of the Egorov approximation scheme (\ref{eq_approx_a0}) and the correction term 
\begin{equation}\label{eq_approx_a2} 
\langle \op(a_2(t))\psi_0,\psi_0\rangle \approx I^{N_2}(\widetilde a_2^{\tau_2}(t))
\end{equation}
are presented as well. For a given accuracy we can observe that, especially for small semiclassical parameters, (\ref{eq_approx_a2}) can be computed much faster than (\ref{eq_approx_a0}). In Figure~\ref{fig_diff_eps} the expected fourth order of our approximation can be seen.
\begin{figure}[htbp]
  \centering
  \subfigure[]{
    \includegraphics[width=6cm]{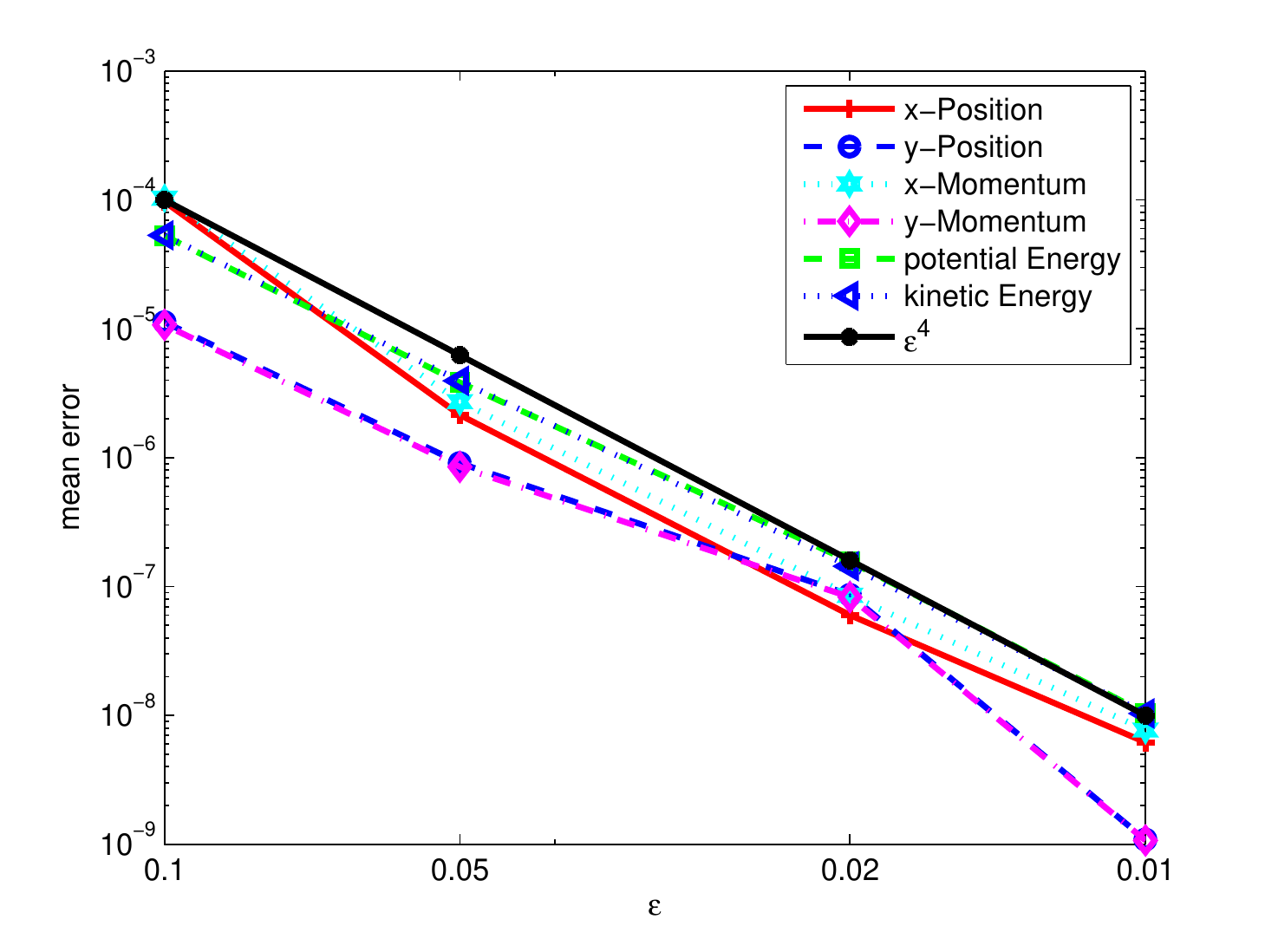} 
  }
  \subfigure[]{
    \includegraphics[width=6cm]{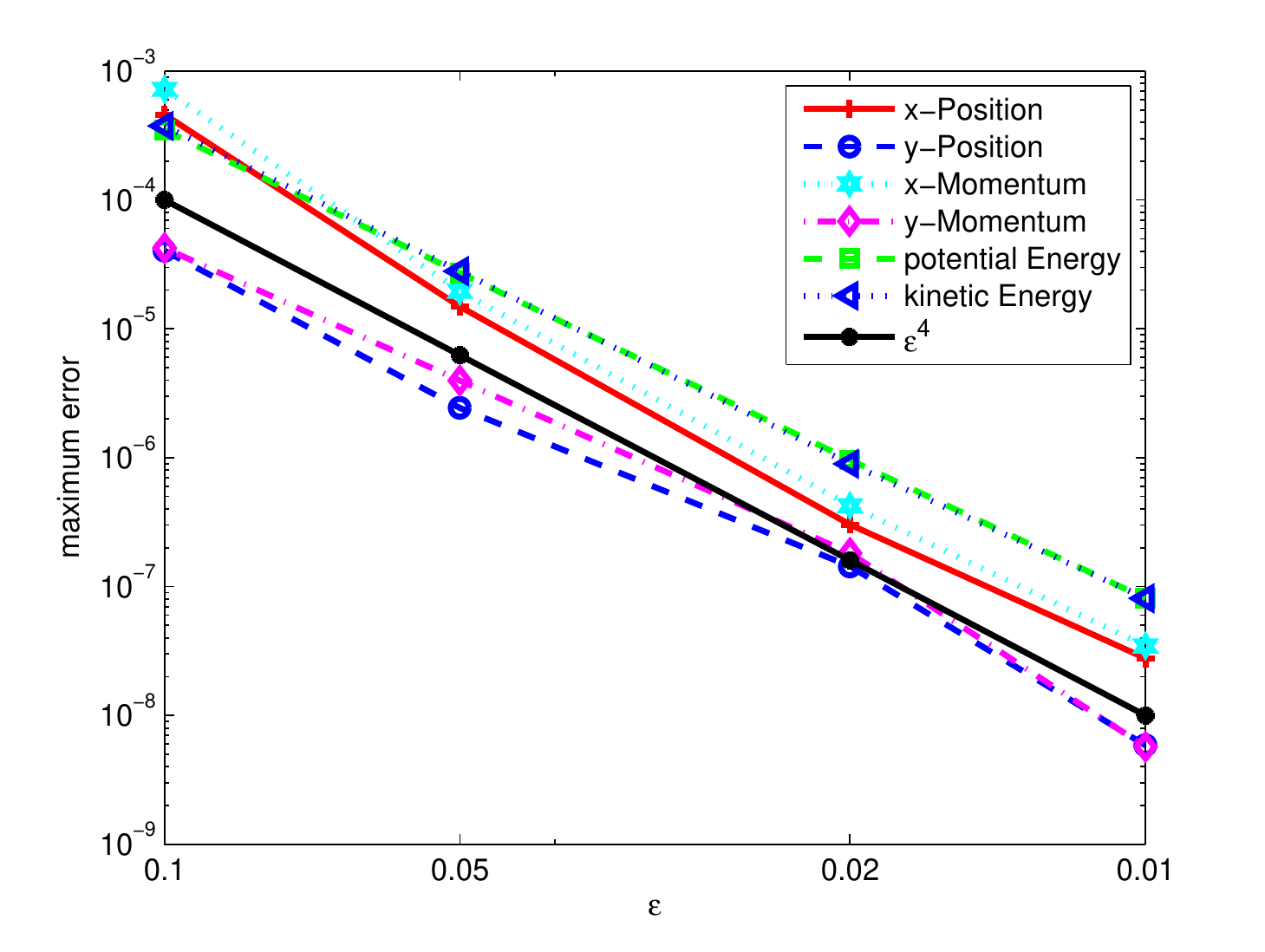}  
  } 
  \caption{The mean (a) and maximal (b) error over time of the expectation values of positions and momenta, kinetic and potential energy computed with the corrected algorithm (\ref{eq_approx_all}) as a function of $\eps$. The values for the number of sampling points and the time step sizes are given in Table~\ref{tab_diff_eps}. Both plots show an order of $\eps^4$.}
	\label{fig_diff_eps}
\end{figure}
\begin{table}[htbp]
	\label{tab_diff_eps}
	\caption{Numbers of sampling points $N_0$ and $N_2$, time step sizes $\tau_0$ and $\tau_2$ as well as the computing times $t^{\rm elaps}_0$ for the Egorov algorithm (\ref{eq_approx_a0}) and $t^{\rm elaps}_2$ for the corrections computed by the approximation scheme (\ref{eq_approx_a2}) that were used for the  results in Figure~\ref{fig_diff_eps}.  }
	\begin{indented}
	\item[]\begin{tabular}{ @{} l  l  l  l  l  l  l }
		\br 
		$\eps$ & $N_0$ & $\tau_0$ & $t^{\rm elaps}_0$ & $N_2$ & $\tau_2$ & $t^{\rm elaps}_2$ \\ \mr
		$0.1$ & $10^5$ & $0.1$ & $6.5$ sec & $500$ & $2^{-2}$ & $2.5$ sec\\ 
		$0.05$ & $10^6$ & $0.1$ & $46$ sec & $10^3$ & $2^{-3}$ & $6$ sec\\ 
		$0.02$ & $10^7$ & $0.1$ & $7.5$ min & $10^4$ & $2^{-5}$ & $2.5$ min \\ 
		$0.01$ & $10^8$ & $0.1$ &  $65$ min & $10^4$ & $2^{-5}$ & $2.5$ min \\ 
		\br
	\end{tabular}
	\end{indented}
\end{table}

\subsection{Discretization errors}
For the experiments shown in Figure~\ref{fig_err_N} and Figure~\ref{fig_err_tau} the values of the leading order approximation (\ref{eq_approx_a0}) are computed with $N_0=10^8$ sampling points and a time step size of $\tau_0=2^{-4}$.
In Figure~\ref{fig_err_N}, we examine the dependency of the absolute error of the expectation values from the number of particles $N_2$. To do so we compare expectation values coming from (\ref{eq_approx_all}) with those from our reference and compute the maximal as well as the mean error over the time interval $[0,15]$. For the computation of (\ref{eq_approx_a2}) we apply our fourth order splitting scheme with a time step size of $\tau_2=10^{-2}$. We can see that the mean as well as the maximal error decrease with an order slightly worse than $\eps^2 / N_2$ until they reach a lower bound of the order $\eps^4$. In Figure~\ref{fig_err_tau}, the mean and maximal error over time of the approximated expectation values depending on the step size $\tau_2$ is displayed. Here the number of sampling points in chosen as $N_2=10^4$. As expected, we observe that the mean as well as the maximum error decrease with an order of $\eps^2\tau_2^4$ until they reach values of the order $\eps^4$.  
\begin{figure}[htbp]
  \centering
  \subfigure[]{
    \includegraphics[width=6cm]{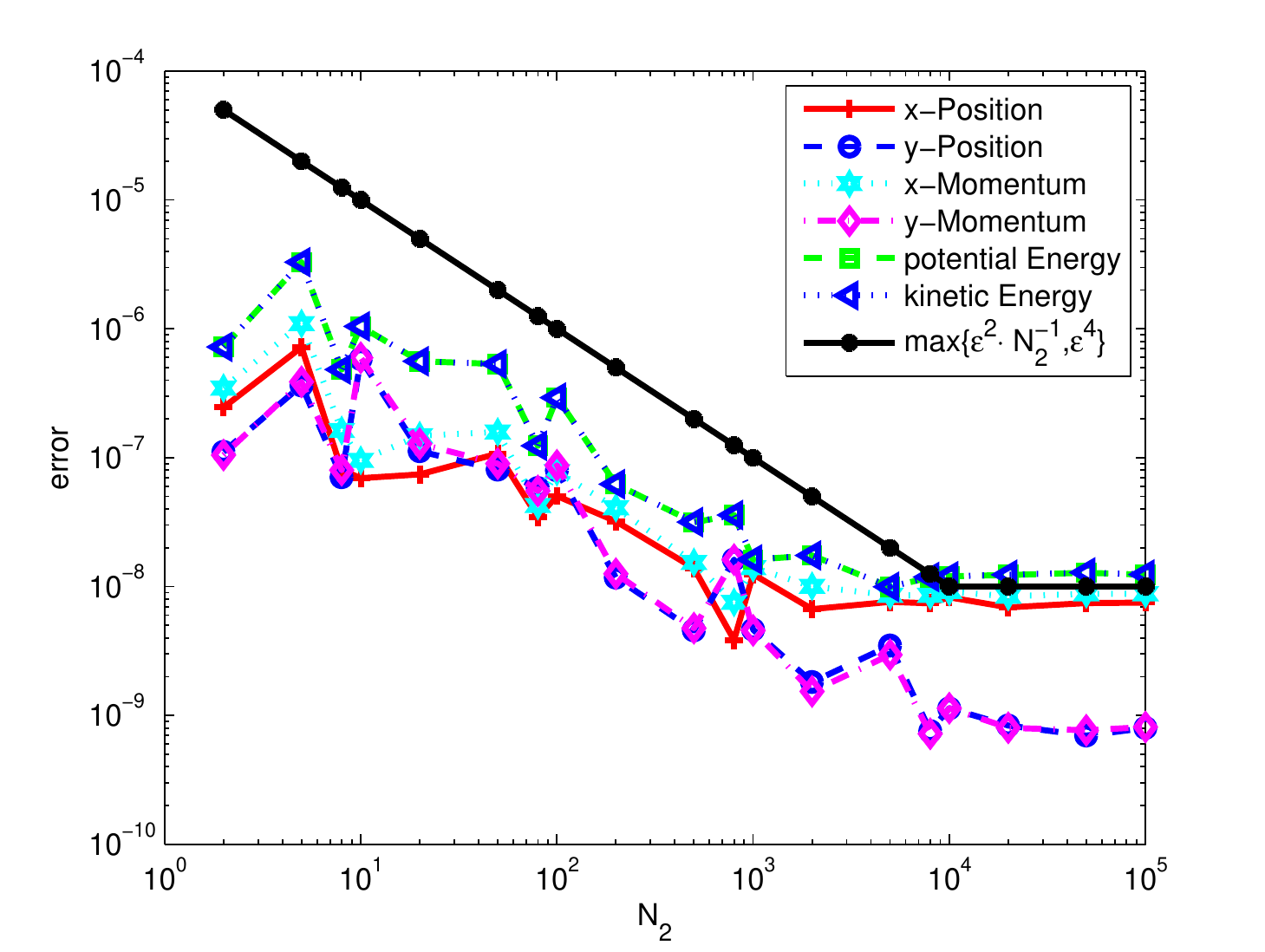} 
  }
  \subfigure[]{
    \includegraphics[width=6cm]{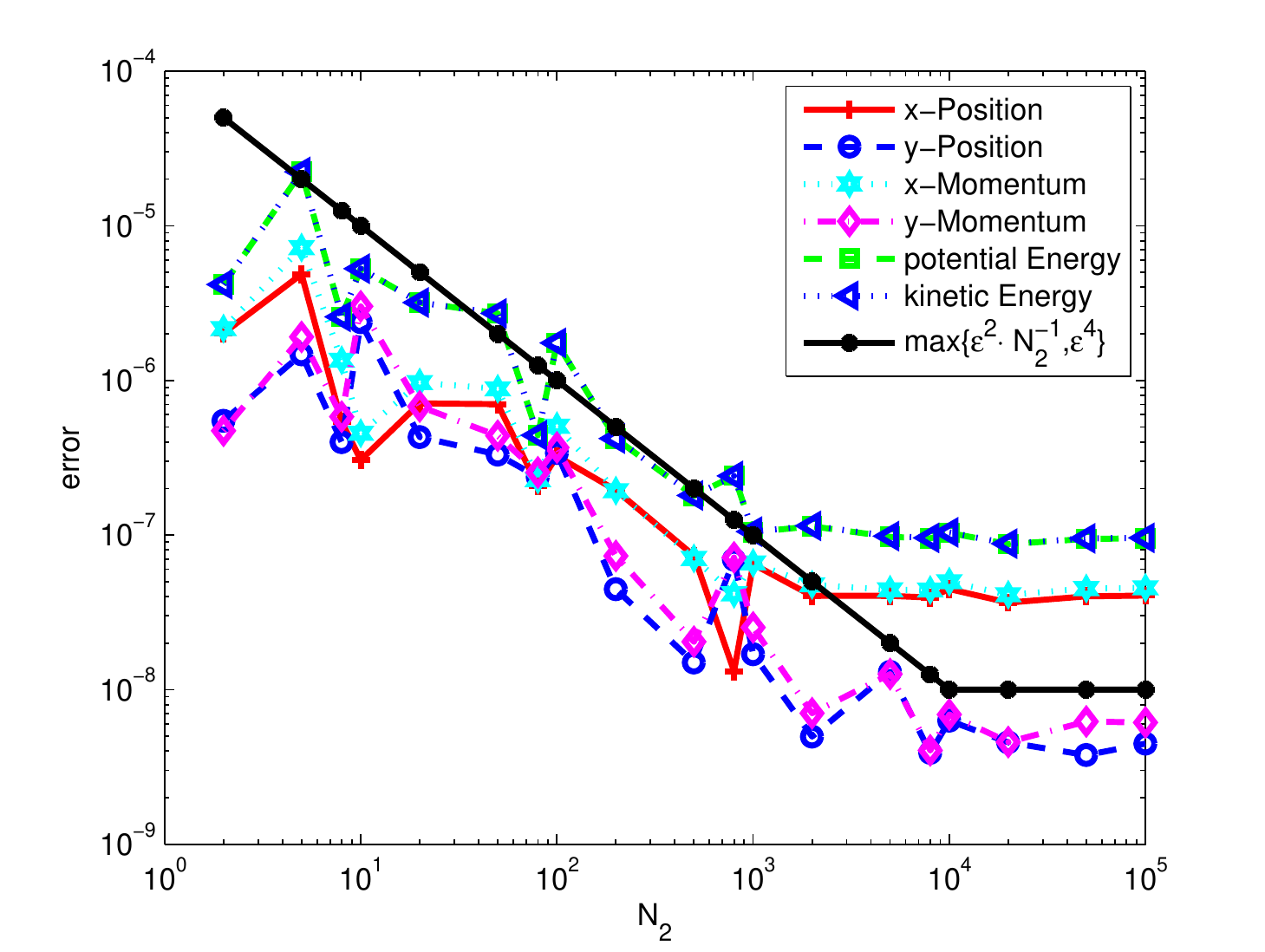}  
  } 
  \caption{The mean (a) and maximal (b) error over time of the expectation values of positions and momenta, kinetic and potential energy computed with the approximation (\ref{eq_approx_all}) as a function of $N_2$. The semiclassical parameter is chosen as $\eps=10^{-2}$ and the time step for the splitting scheme $F^{\tau_2}_4$ as $\tau_2=2^{-4}$. In both plots the order of the error is slightly worse than $\eps^2 / N_2$ and bounded from below by values of the order $\eps^4$. }
	\label{fig_err_N}
\end{figure}

\begin{figure}[htbp]
  \centering
  \subfigure[]{
    \includegraphics[width=6cm]{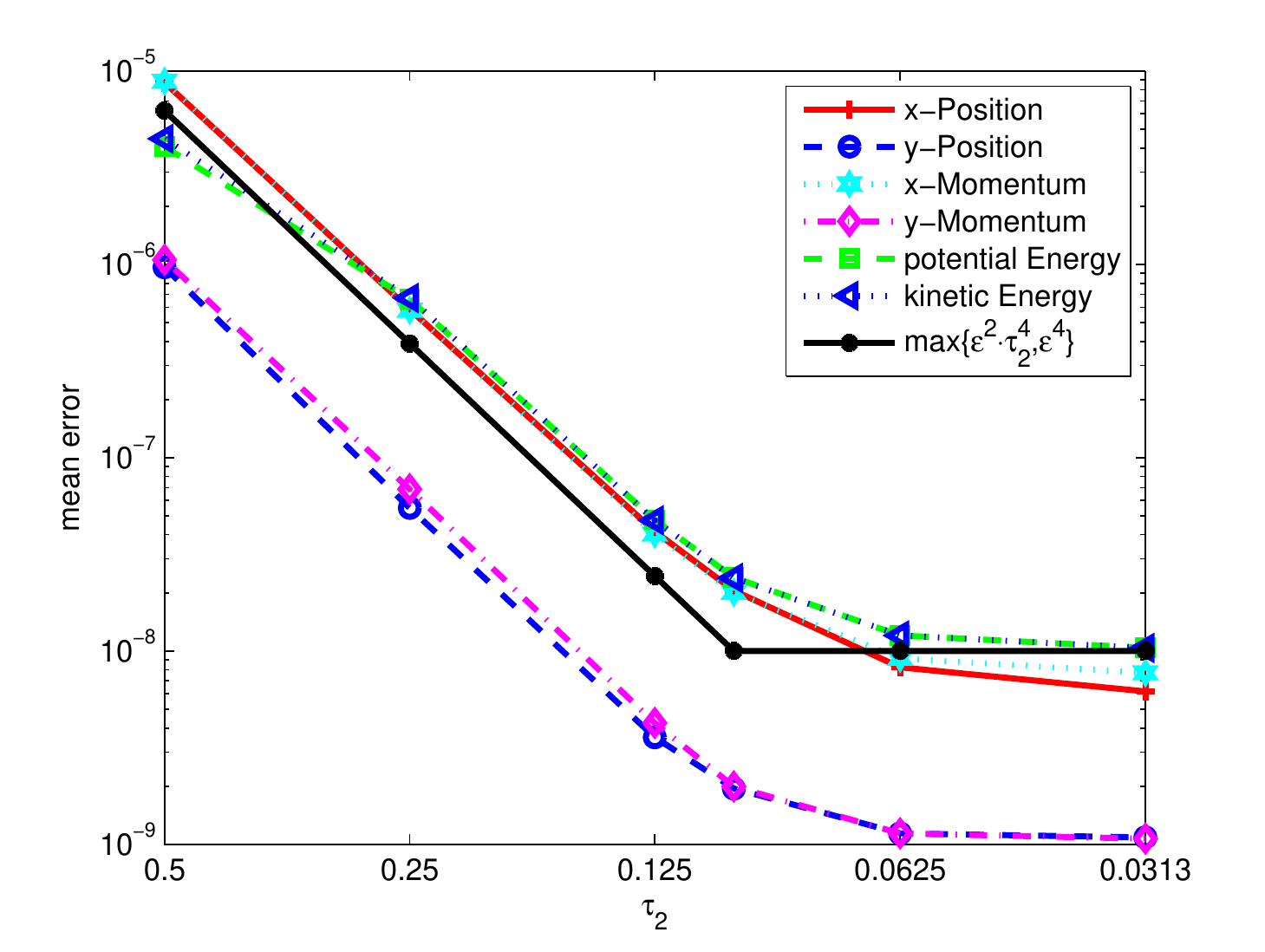} 
  }
  \subfigure[]{
    \includegraphics[width=6cm]{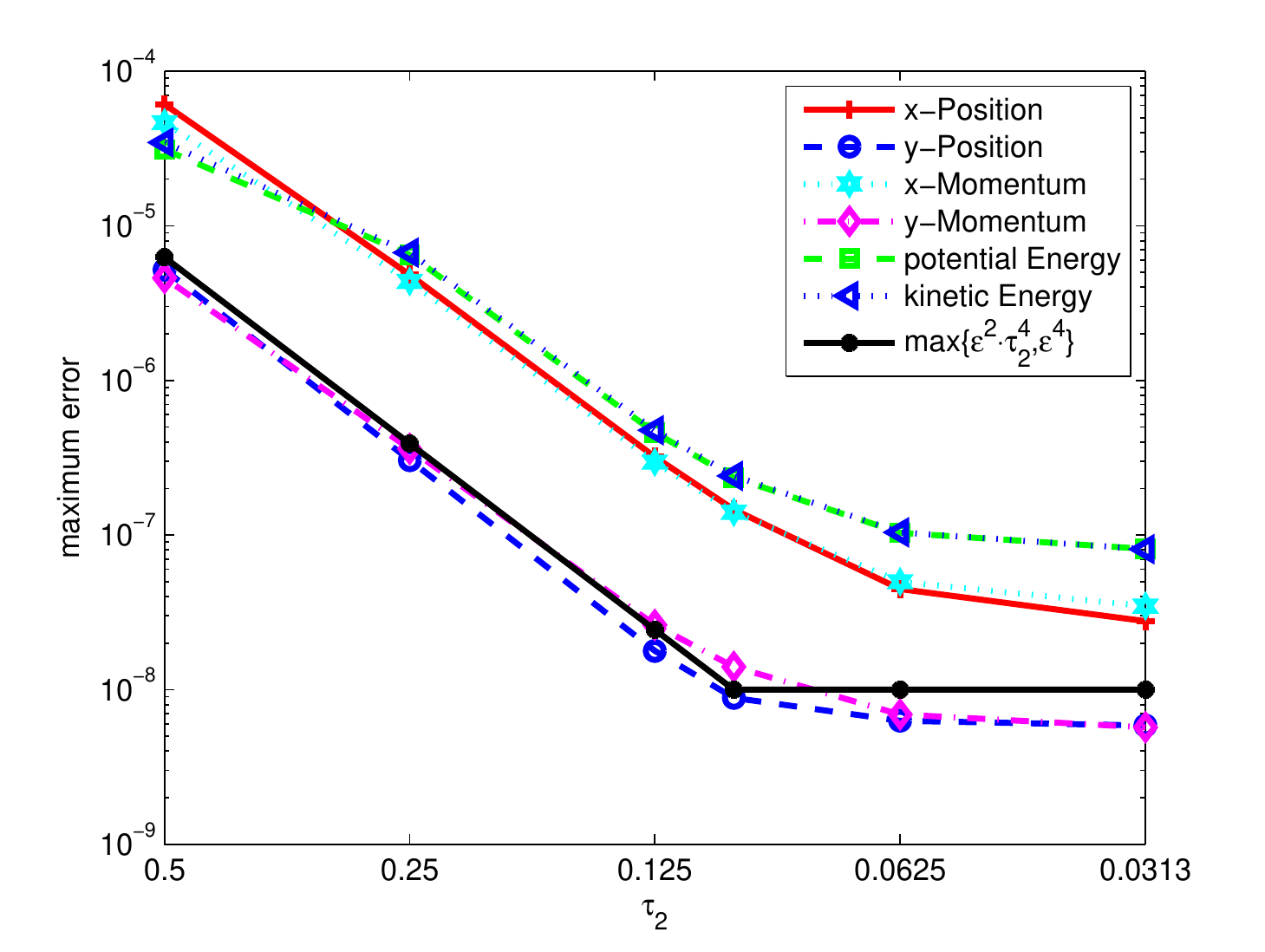}  
  } 
  \caption{The mean (a) and maximal (b) error over time of the expectation values of positions and momenta, kinetic and potential energy computed with the corrected algorithm (\ref{eq_approx_all}) as a function of $\tau_2$. The semiclassical parameter is chosen as $\eps=10^{-2}$ and the number of sampling points for (\ref{eq_approx_a2}) as $N_0=10^4$. In both plots the error is of the order $\eps^2 \tau_2^4$ but bounded from below by values of order $\eps^4$. }
	\label{fig_err_tau}
\end{figure}
\section*{Acknowledgement} We thank one of the anonymous referees for pointing us to the literature on Wigner trajectories. This research was supported by the German Research Foundation
(DFG), Collaborative Research Center SFB-TR 109.

\appendix
\section{Wigner functions and Weyl operators}\label{app:wigner}
Let $\eps>0$ be a small positive parameter. The $\eps$-scaled Wigner function $W_\psi:\R^{2d}\to\R$ of a square integrable function $\psi\in L^2(\R^d)$ is defined as
\begin{equation*}
W_\psi(q,p) = (2\pi\eps)^{-d} \int_{\R^d} \rme^{\rmi p\cdot y/\eps} \psi(q-\case12 y)\overline\psi(q+\case12 y) \rmd y,
\qquad (q,p)\in\R^{2d}, 
\end{equation*}
see for example \cite[\S1.8]{Fo89} or \cite[\S9]{Go11}. Using Plancherel's theorem it can easily be shown that $\|\psi\|=1$ implies
\begin{equation*}
\int_{\R^{2d}} W_\psi(z) \rmd z = 1.
\end{equation*}
The real-valued Wigner function $W_\psi$ can therefore be interpreted as a phase space description of the wave function $\psi$. In contrast to classical phase space distributions, the Wigner function may also attain negative values. The Wigner function is in close relation to the $\eps$-scaled Weyl quantization $\op(a)$ of Schwartz functions $a:\R^{2d}\to\R$, since
\begin{equation*}
\langle\psi,\op(a)\psi\rangle = \int_{\R^{2d}} a(z)W_\psi(z)\rmd z 
\end{equation*}
with
\begin{equation*}
(\op(a)\psi)(q) = (2\pi\eps)^{-d} \int_{\R^{2d}} a(\case12(q+y),p) \rme^{\rmi p\cdot(q-y)/\eps} \psi(y) \rmd y
\end{equation*}
for $\psi\in L^2(\R^d)$. Weyl quantization also extends to unbounded phase space functions $a$, if operator domains are defined suitably. For example, $-\eps^2\Delta = \op(|p|^2)$. The composition of Weyl operators is a Weyl operator
$\op(a)\op(b) = \op(c)$,  
and the symbol $c:\R^{2d}\to\R$ has an asymptoic expansion in powers of $\eps$, 
\begin{equation*}
c \sim \sum_{k\in\N} \left(\frac{\eps}{2i}\right)^k \{a,b\}_k,
\end{equation*}
where the $k$th generalized Poission bracket is defined as 
\begin{equation}
		\{a,b\}_k = 
			\sum_{|\alpha+\beta|=k} \frac{(-1)^{|\beta|}}{\alpha! \beta!}\, \partial_q^\alpha \partial_p^\beta b\, \partial_q^\beta \partial_p^\alpha a,
	\end{equation}
see \cite[Appendix]{BR02}. We note that the first generalized Poisson bracket coincides with the usual Poission bracket, $\{a,b\}_1 = \partial_p a \partial_q b - \partial_q a \partial_p b$. Moreover, the antisymmetry of the Possion bracket is also satisfied by its generalizations, if $k$ is odd.
	
\begin{lemma}
\label{prop_sym_poisson}
Let $a,b:\R^{2d}\to\R$ be smooth functions and $k\in\N$. Then, 
		\begin{eqnarray}
			\{a,b\}_k-\{b,a\}_k = \cases{ 0, & $k$ even \\ 2\{a,b\}_k, & $k$ odd	} 
		\end{eqnarray}
\end{lemma}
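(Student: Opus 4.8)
The plan is to start from the definition of the generalized Poisson bracket and compare $\{a,b\}_k$ with $\{b,a\}_k$ term by term, after a relabeling of the summation multi-indices. First I would write out $\{b,a\}_k$ by interchanging the roles of $a$ and $b$ in the defining sum,
\begin{equation*}
\{b,a\}_k = \sum_{|\alpha+\beta|=k} \frac{(-1)^{|\beta|}}{\alpha!\,\beta!}\,\partial_q^\alpha\partial_p^\beta a\;\partial_q^\beta\partial_p^\alpha b.
\end{equation*}
The crucial observation is that, since the sum ranges over all pairs of multi-indices with $|\alpha+\beta|=k$, I may rename the dummy indices $\alpha\leftrightarrow\beta$ without changing the value of the sum. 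This yields
\begin{equation*}
\{b,a\}_k = \sum_{|\alpha+\beta|=k} \frac{(-1)^{|\alpha|}}{\alpha!\,\beta!}\,\partial_q^\alpha\partial_p^\beta b\;\partial_q^\beta\partial_p^\alpha a,
\end{equation*}
in which every differential factor now matches exactly the corresponding factor in $\{a,b\}_k$; only the sign $(-1)^{|\alpha|}$ differs from the $(-1)^{|\beta|}$ appearing in $\{a,b\}_k$.

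Subtracting the two expressions, the differential factors coincide and I obtain
\begin{equation*}
\{a,b\}_k - \{b,a\}_k = \sum_{|\alpha+\beta|=k} \frac{(-1)^{|\beta|}-(-1)^{|\alpha|}}{\alpha!\,\beta!}\,\partial_q^\alpha\partial_p^\beta b\;\partial_q^\beta\partial_p^\alpha a.
\end{equation*}
The remaining step is a parity argument on the coefficient $(-1)^{|\beta|}-(-1)^{|\alpha|}$, using the constraint $|\alpha|+|\beta|=k$. If $k$ is even, then $|\alpha|$ and $|\beta|$ share the same parity, so $(-1)^{|\alpha|}=(-1)^{|\beta|}$ and every coefficient vanishes, giving $\{a,b\}_k-\{b,a\}_k=0$. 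If $k$ is odd, then $|\alpha|$ and $|\beta|$ have opposite parity, so $(-1)^{|\beta|}=-(-1)^{|\alpha|}$ and the coefficient equals $2(-1)^{|\beta|}$; the sum then collapses back to $2\{a,b\}_k$, as claimed.

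I do not expect a genuine obstacle here: the entire argument is an index relabeling together with a parity case distinction, and the smoothness hypothesis guarantees that all mixed derivatives exist so that the finite sums may be rearranged freely. The only point requiring a moment's care is verifying that the $\alpha\leftrightarrow\beta$ relabeling leaves the product $\partial_q^\alpha\partial_p^\beta b\,\partial_q^\beta\partial_p^\alpha a$ invariant up to the sign factor, which is immediate once one checks that the exponents attached to $b$ and to $a$ are interchanged consistently.
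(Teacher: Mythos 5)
Your proof is correct and follows essentially the same route as the paper: both interchange the dummy multi-indices $\alpha\leftrightarrow\beta$ in the expression for $\{b,a\}_k$, subtract to isolate the coefficient $(-1)^{|\beta|}-(-1)^{|\alpha|}$, and conclude by the parity argument forced by $|\alpha|+|\beta|=k$. Your write-up is merely more explicit about the relabeling step, which the paper compresses into a single sentence.
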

\begin{proof}
	By interchanging the multi-indices $\alpha$ and $\beta$ in the formula for $\{g,f\}_k$ we get
	\begin{eqnarray*}
		\{a,b\}_k-\{b,a\}_k = \sum_{|\alpha+\beta|=k} \frac{(-1)^{|\beta|}-(-1)^{|\alpha|}}{\alpha! \beta!} \partial_q^\alpha \partial_p^\beta b\, 
			\partial_q^\beta \partial_p^\alpha a.
	\end{eqnarray*} 
Since $|\alpha+\beta|=k$, we conclude that for even $k$, $|\alpha|$ is even if and only if $|\beta|$ is even 
	and for odd $k$, $|\alpha|$ is even if and only if $|\beta|$ is odd, which finishes the proof.
\end{proof}

In consequence, the commutator of two Weyl operators has an asymptotic expansion in odd powers of $\eps$,
\begin{equation*}
[\op(a),\op(b)] = \op(a)\op(b)-\op(b)\op(a) \sim 2 \sum_{k\in2\N+1} \left(\frac{\eps}{2i}\right)^k \op(\{a,b\}_k).
\end{equation*}

\section*{References}


\begin{thebibliography}{99}
\bibitem{BR02}
Bouzouina A and Robert D 2002 Uniform semiclassical estimates for the propagation of quantum observables
{\it Duke Math. J.} {\bf 111} 223--252
\bibitem{BH81}
Brown R and Heller E 1981 Classical trajectory approach to photodissociation: The Wigner method 
\JCP {\bf 75} 186--188
\bibitem{DM01}
Donoso A and Martens C 2001 Quantum tunneling using entangled classical trajectories
\PRL {\bf 87} 223202
\bibitem{DS99}
Dimassi M and Sj\"ostrand J 1999 {\it Spectral asymptotics in the semi-classical limit (LMS Lecture Note Series no. 268)} 
(Cambridge: Cambridge University Press)
\bibitem{Eg69}
Egorov Y 1969 On canonical transformations of pseudodifferential operators 
{\it Uspekhi Mat. Nauk} {\bf 24} 235--236
\bibitem{Fo89}
Folland G 1989 {\it Harmonic Analysis in Phase Space} (Princeton: Princeton University Press)
\bibitem{Go11}
de Gosson M 2011 {\it Symplectic Methods in Harmonic Analysis and in Mathematical Physics} (Basel: Birkh\"auser)
\bibitem{He76}
Heller E 1976 Wigner phase space method: Analysis for semiclassical applications
\JCP {\bf 65} 1289--1298
\bibitem{KC05}
Kryvohuz M and Cao J 2005 Quantum-classical correspondence in response theory
\PRL {\bf 95} 180405
\bibitem{LR10}
Lasser C and R\"oblitz S 2010 Computing expectation values for molecular quantum dynamics 
{\it SIAM J. Sci. Comput.} {\bf 32} 1465--1483
\bibitem{L92}
Lee H 1992 Wigner trajectories of a Gaussian wave packet perturbed by a weak potential
{\it Found. Phys.} {\bf 22} 995--1010
\bibitem{L95}
Lee H 1995 Theory and application of the quantum phase-space distrubtion functions
{\it Phys. Rep.} {\bf 259} 147--211
\bibitem{LS80}
Lee H and Scully M 1980 A new approach to molecular collisions: Statistical quasiclassical method
\JCP {\bf 73} 2238--2242
\bibitem{LS82}
Lee H and Scully M 1982 Wigner phase-space description of a Morse oscillator
\JCP {\bf 77} 4604--4610
\bibitem{LM07}
Liu J and Miller W 2007 Real time correlation function in a single phase space integral
beyond the linearized semiclassical initial value representation
\JCP {\bf 126} 234110
\bibitem{Ma02}
Martinez A 2002 {\it An Introduction to Semiclassical and Microlocal Analysis} ({\it Universitext}) (New York: Springer)
\bibitem{Mi74}
Miller W 1974 Quantum mechanical transition state theory and a new semiclassical model
for reaction rate constants \JCP {\bf 61} 1823--1834
\bibitem{Pu06}
Pulvirenti M 2006 Semiclassical expansion of Wigner functions
\JMP {\bf 47} 052103
\bibitem{Ro87}
Robert D 1987 {\it Autour de l'Approximation Semi-Classique} ({\it Progress in Mathematics} vol 68) (Boston: Birkh\"auser)
\bibitem{ST01}
Spohn H and Teufel S 2001 Adiabatic decoupling and time-dependent Born-Oppenheimer theory
{\it Commun. Math. Phys.} {\bf 224} 113--172 
\bibitem{Ta91}
Taylor M 1991 {\it Pseudodifferential Operators and Nonlinear PDE} ({\it Progress in Mathematics} vol 100) (Boston: Birkh\"auser)
\bibitem{TW04}
Thoss M and Wang H 2004 Semiclassical description of molecular dynamics based on initial-value representation methods
{\it Annu. Rev. Phys. Chem.} {\bf 55} 299--332
\bibitem{WSW08}
Waalkens H, Schubert R and Wiggins S 2008 
Wigner's dynamical transition state theory in phase space: classical and quantum
\NL {\bf 21} R1--R118
\bibitem{WSM98}
Wang H, Sun X and Miller H 1998 Semiclassical approximations for the calculation of thermal rate constants
for chemical reactions in complex molecular systems \JCP {\bf 108} 9726--9736
\bibitem{Wi32}
Wigner E 1932 On the quantum correction for thermodynamic equilibrium 
\PR {\bf 40} 749--759
\bibitem{Zw12}
Zworski M 2012 {\it Semiclassical Analysis} ({\it Graduate Studies in Mathematics} vol 138) (Providence: AMS)
\bibitem{Yo90}
Yoshida H 1990 {\it Construction of higher order symplectic integrators} {\it Physics Letters A} {\bf 150} no.5,6,7 262--268
\bibitem{HWL06}
Hairer E and Lubich C and Wanner G 2006 {\it Geometric Numerical Integration. Structure-Preserving Algorithms for Ordinary Differential Equations, Second Edition} 
({\it Springer Series in Computational Mathematics} vol 31) (Berlin: Springer-Verlag)
\end{thebibliography}
\end{document}